\documentclass[12pt]{article}
\usepackage[T1]{fontenc}
\usepackage[utf8]{inputenc}
\usepackage{amsmath,amsfonts,amscd,bezier}
\usepackage{amsthm}
\usepackage{import}
\usepackage[normalem]{ulem}
\usepackage{amssymb}
\usepackage{graphicx}
\usepackage[brazil,english]{babel} 
\usepackage[Conny]{fncychap}
\usepackage{hyperref}
\usepackage{fancyhdr} 
\usepackage{indentfirst}
\usepackage{xfrac}
\usepackage{imakeidx} 
\everymath{\displaystyle}
\newtheorem{theorem}{Theorem}[section]
\newtheorem{proposition}[theorem]{Proposition}
\newtheorem{lemma}[theorem]{Lemma}
\newtheorem{corollary}[theorem]{Corollary}
\theoremstyle{definition}
\newtheorem{definition}[theorem]{Definition}
\newtheorem{example}[theorem]{Example}
\newtheorem{remark}[theorem]{Remark}

\usepackage[dvipsnames]{xcolor}
\usepackage{tikz}
\usepackage{multicol}
\usepackage{float}
\usepackage{mathpple}
\usepackage{times}
\usepackage{comment}

\makeindex 

\begin{document}

\title{\bf Divergent diagrams of folds associated with reflections}

\author{Patr\'{\i}cia~H.~Baptistelli, Maria Elenice R. Hernandes\\
{\small Departamento de Matem\'atica, Universidade Estadual de Maring\'a}\\
{\small Av. Colombo, 5790, 87020-900, Maring\'a - PR, Brazil \footnote{Email address: phbaptistelli@uem.br (corresponding author), merhernandes@uem.br}}
\and Miriam Manoel\\
{\small Departamento de Matemática, ICMC, Universidade de São Paulo}\\
{\small 13560-970, Caixa Postal 668, São Carlos - SP, Brazil \footnote{Email address: miriam@icmc.usp.br}} \thanks{The authors are partially supported by CNPq grants 407454/2023-3.}}

\date{}
\maketitle

\begin{abstract}
We analyse divergent diagrams of \(k\)-fold map-germs on \((\mathbb{C}^n,0)\), for $k, n \geq 2$, associated with reflections, adapting to the complex setting the theory of folds associated with involutions on \((\mathbb{R}^n,0)\). In the complex case, a \(k\)-fold is naturally related to a cyclic group generated by a reflection, which guides the analytic classification of singularities. 
Under the conditions of transversality and linearity of the associated reflections, 
certain conditions related to the nontrivial eigenvalues appear as invariants by simultaneous conjugacy. We also provide a complete classification of pairs of transversal linear reflections and the corresponding divergent diagrams.
\end{abstract}

\noindent {\it Keywords:}  reflections, $k$-fold, divergent diagram, transversality, normal form. \\

\noindent{\bf 2020 Mathematics Subject Classification:} 20F55, 58K40, 32S05

\section{Introduction}

The study of diagrams of smooth mappings is a central topic in differential geometry and singularity theory, particularly in  context of classification of mappings based on their singularities (for example, folds, cusps, etc). A theory for the study of diagrams was established by Dufour in \cite{dufour}, identifying the two distinct cases,  the convergent diagrams and the divergent diagrams. In terms of the classical singularity theory, the convergent case is better established, primarily due to the existence of a Preparation Theorem (which essentially states that locally a holomorphic germ can be written as a polynomial in one variable, up to an invertible factor). In contrast, the absence of a Preparation Theorem in the divergent case makes the study more complex and less systematic. Nevertheless, important contributions to divergent diagrams have been made in some contexts, for example \cite{bivia, Kuro, Mancini, MMT, Teixeira}.

Reflections have been studied and applied from multiple viewpoints, generally considered as linear mappings. They play a central role in the theories of Coxeter and Weyl (\cite{cox, H}). We also cite, for example, the complete classification  of linear reflection groups 
on $\mathbb{C}^n$  by G. Shephard and J. A. Todd \cite{Todd}. In our study, reflections on  $(\mathbb{C}^n,0)$ arise naturally as nonlinear mappings, since they will be associated with fold singularities (see Definition~\ref{def: association}). Specifically, the classification of divergent diagrams of folds on  $(\mathbb{C}^n,0)$ is derived from their association with the corresponding reflections.

More precisely, for $s \leq n$, we establish that two divergent diagrams of $s$ folds are equivalent under the Mather's group $\mathcal{A}=\mathcal{R}\times  \mathcal{L}$ (right-left equivalence) if and only if the product group obtained from the associated $s$-tuples of reflections are conjugate (see Theorem~\ref{teoequivdiagramas2}). Our results generalize the corresponding case for $(\mathbb{R}^n,0)$ considered in \cite{MMT}.

As mentioned above, this work is devoted to divergent diagrams in which the map-germs are fold singularities on $(\mathbb{C}^n,0)$, $n \geq 2$. We discuss the correlation between the classification of divergent diagrams of folds and the classification of associated reflections on $(\mathbb{C}^n,0)$.   
Our motivation is primarily rooted in the study of Mancini, Manoel, and
Teixeira~\cite{MMT}, where the authors investigate the classification of
divergent diagrams of folds associated with involutions on $(\mathbb{R}^n,0)$.
In this context, an involution is a germ of diffeomorphism of order 2 whose fixed-point submanifold may have any codimension between $1$ and $n-1$. In \cite{MMT} the authors address the case where this codimension is one and each fold is associated with a unique involution. In the complex case, a reflection is a germ of biholomorphism on $(\mathbb{C}^n,0)$ of finite order whose fixed-point set is a submanifold of codimension 1. In this case, a $k$-fold is naturally associated with a cyclic group generated by a reflection of order $k$.

In this paper, we explore the implications of this fundamental difference. More importantly, the extension to the complex framework developed here sheds light on certain invariants appearing in the algebraic expressions of the normal forms in the real case, which had remained unexplained for nearly twenty years.

For $k \geq 2$ integer, we study $k$-folds $p:(\mathbb C^n,0)\to (\mathbb C^n,0)$ with respect to their association with reflections $\alpha$, that is, satisfying $p \circ \alpha = p.$  Also, for a vector $\underline{k} = (k_1, \ldots, k_s)$ of integers, we study divergent diagrams 
$$(p_1, \ldots, p_s) : (\mathbb C^n,0) \to (\mathbb C^n \times \cdots \times \mathbb C^n,0),$$
 with respect to their association with  $s$-tuples of reflections $(\alpha_1, \ldots, \alpha_s)$,   
where  $p_i$  is a $k_i$-fold associated with the reflection $\alpha_i$, that is, 
\[ p_i \circ \alpha_i = p_i, \ \ i = 1 \ldots, s.  \]
For short, we shall say that $(p_i)_s$ is associated with $(\alpha_i)_s$. Our main interest is the classification and recognition of diagrams  $(p_i)_s$ from the classification  of the $s$-tuple $(\alpha_i)_s$ of reflections. In particular, following \cite{MMT} we have been driven to the classification of $k$-folds associated with transversal set of linear reflections. \\

This paper is organized as follows. In Section 2, we analyze the interplay between reflections and $k$-folds that will be used throughout the paper. Section 3 is devoted to the case of $k$-folds associated with reflections of order $k$. In particular, we prove that in the class of $k$-fold singularities, the relation of associated with a given reflection is a complete invariant with res\-pect to $\mathcal{L}$-equivalence (Theorem~\ref{thmlequiv}). In Section 4, we study divergent diagrams of $\underline{k}$-folds, where we first establish their equivalence by means of the associated reflections (Theorem \ref{teo: equivdiagrams1}). This is then applied to give explicit normal forms for diagrams of two folds under a linearity and transversality condition of the associated pair of reflections (Theorems \ref{prop: normal-form-pair-reflection} and \ref{teoclassdobrassnm2}).

\section{$k$-folds associated with reflections} 
\label{sec:kfolds}

The aim of this section is to describe the relationship between \(k\)-folds and reflections on \((\mathbb{C}^n,0)\), for \(k, n \geq 2\), and present consequences arising from this connection.

A (complex) reflection is a germ of biholomorphism $\varphi: ({\mathbb C}^n,0)  \to ({\mathbb C}^n,0)$ of finite order $r \geq 2$ whose fixed-point set ${\rm Fix}({\varphi}) = \{x \in ({\mathbb C}^n,0); \ \varphi(x) = x\}$ is a submanifold of codimension 1. For any $j$, if $m \geq 2$ is the order of the reflection $\varphi^{j}$, with
\[\varphi^{j} = \underbrace{\varphi \circ \ldots \circ \varphi}_{\text{$j$ times}} \ , \] 
then $m = \tfrac{r}{gcd(j,r)}$, where $gcd(j,r)$ is the greatest common divisor of $j$ and $r$.  

Let us also recall the Mather's group ${\cal A} = {\cal R} \times {\cal L}$ of pairs $(h,l)$ of germs of biholomorphisms $h,l: (\mathbb{C}^n,0) \to (\mathbb{C}^{n}, 0)$ and its standard action on the set of holomorphic germs $f:  (\mathbb{C}^n,0) \to (\mathbb{C}^n,0)$ given by $(h,l) \cdot f = l \circ f \circ h^{-1}$. For $k \geq 2$, a map-germ 
is called a $k$-fold singularity, or simply a $k$-fold, if it is ${\cal A}$-equivalent to $ f_1: (\mathbb{C}^n,0) \to (\mathbb{C}^n,0) $,   
	\begin{equation} \label{eq:f1}
	 	f_1 (x) = (x_1 ^k , x_2 , \ldots, x_n )
    \end{equation} for  $x = (x_1, \ldots, x_n) \in (\mathbb{C}^n,0)$. 

For simplicity, throughout the paper we suppress the term \emph{germ} and refer to germs of (bi)holomorphisms simply as (bi)holomorphisms.

\begin{definition} \label{def: association}
	Given a reflection $ \varphi$ and a $k$-fold $f$ on $(\mathbb{C}^n,0)$, we say that $f$ is associated with $ \varphi $, and $ \varphi $ is associated with $ f $, if $f \circ \varphi = f $. 
\end{definition}

It is immediate from this definition that if $\varphi$ is associated with $f$, then any reflection in the group $\langle\varphi\rangle$ generated by $\varphi$ is associated with $f$; so  reflections associated with a $k$-fold are not necessarily of order $k$. \\

For the canonical $k$-fold $f_1$ given in (\ref{eq:f1}), we shall denote by $\varphi_1 : (\mathbb{C}^n,0) \to (\mathbb{C}^n,0) $ the canonical linear reflection
\begin{equation}\label{eq:varphi1}
	\varphi_1 (x) = (\lambda x_1 , x_2, \ldots , x_n), 
	\end{equation} 
where $\lambda = e^{\frac{2 \pi i}{k}}$ is the first primitive $k$-th root of unity. Notice that $\varphi_1$ generates a cyclic group $\langle\varphi_1\rangle \simeq {\bf Z}_k$. Following (\ref{eq:f1}) and (\ref{eq:varphi1}), let us also introduce the following natural notation: for $i = 1,\ldots, n$, we denote the $k$-folds
	\begin{equation} \label{eq: fold i}
	 	f_i (x) = (x_1, \ldots, x_i^k , \ldots, x_n )
    \end{equation}
and their associated canonical reflections
\begin{equation}\label{eq: reflection i}
	\varphi_i (x) = (x_1, \ldots, \lambda x_i,  \ldots , x_n). 
	\end{equation} 

\noindent For the singular set $\Sigma(f_i)$ of $f_i$,  we obviously have 
\begin{equation} \label{eq: sigma 1 = fix 1}
\Sigma(f_i) = {\rm Fix}({\varphi_i}), \ \ \  i = 1, \ldots, n.
\end{equation}

We now define the equivalence relation on the set of reflections. 

\begin{definition} \label{def:uplaequiv}
Two reflections $\alpha, \beta: (\mathbb{C}^n,0) \to (\mathbb{C}^n,0)$  are equivalent if they are conjugate, namely if $\beta = h \circ \alpha \circ h^{-1}$, where $h$ is a biholomorphism  on $(\mathbb{C}^n,0)$. 
\end{definition}

Next, we present a sequence of results on  $k$-fold singularities and reflections on $(\mathbb{C}^n,0)$, extending the results of \cite{MMT} for folds and involutions on $(\mathbb{R}^n,0)$. For the results regarding a canonical $k$-fold $f_i$ as in \eqref{eq: fold i} and its canonical associated reflection $\varphi_i$ as in (\ref{eq: reflection i}), we fix $i=1$ for simplicity.

\begin{lemma}\label{obsphizero}
For $f_1$ given in (\ref{eq:f1}), any reflection associated with $f_1$ belongs to the group $\langle \varphi_1\rangle$, with $\varphi_1$ given in (\ref{eq:varphi1}).    
\end{lemma}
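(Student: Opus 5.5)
Let $\varphi=(\varphi^1,\ldots,\varphi^n)$ be a reflection with $f_1\circ\varphi=f_1$. Comparing components gives
\[
(\varphi^1(x))^k=x_1^k \quad\text{and}\quad \varphi^j(x)=x_j \ \ (j=2,\ldots,n).
\]
So the last $n-1$ coordinates are fixed, and it only remains to determine the first component $\varphi^1$.

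From $(\varphi^1)^k=x_1^k$ we get
\[
\prod_{m=0}^{k-1}\bigl(\varphi^1(x)-\lambda^m x_1\bigr)=0
\]
in the ring $\mathcal{O}_n$ of germs of holomorphic functions at $0$. Since $\mathcal{O}_n$ is an integral domain, some factor vanishes identically. Hence $\varphi^1(x)=\lambda^m x_1$ for a single $m\in\{0,\ldots,k-1\}$, and therefore $\varphi=\varphi_1^m$.

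An equivalent route is to write $\varphi^1=x_1 u$. This is possible because $\varphi^1$ vanishes on $\{x_1=0\}$: indeed $(\varphi^1)^k=x_1^k$ vanishes there. Then $u^k=1$, so $u$ is a continuous function with values in the discrete set of $k$-th roots of unity. On a connected neighbourhood $u$ is therefore constant, equal to some $\lambda^m$.

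Finally, $m\neq 0$. If $m=0$, then $\varphi$ would be the identity, which is not a reflection: its order is $1<2$ and its fixed set is everything, not of codimension $1$. So $\varphi=\varphi_1^m\in\langle\varphi_1\rangle$ with $1\le m\le k-1$. As a consistency check, $\varphi_1^m$ is indeed a reflection, of order $k/\gcd(m,k)$, with fixed set $\{x_1=0\}$.

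I expect no serious obstacle. The only point needing care is the passage from $\prod_m(\varphi^1-\lambda^m x_1)=0$ to the vanishing of a single factor. This is the integral-domain property of $\mathcal{O}_n$; alternatively, it follows from the identity theorem on a connected neighbourhood.
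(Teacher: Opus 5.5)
Your proof is correct and follows the paper's route. Comparing components of $f_1\circ\varphi=f_1$ gives $\varphi^j=x_j$ for $j\ge 2$ and $(\varphi^1)^k=x_1^k$, hence $\varphi^1=\lambda^m x_1$. You also justify the step the paper asserts without comment, using the integral-domain property of $\mathcal{O}_n$, and you explicitly rule out $m=0$, which the paper leaves implicit.
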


\begin{proof} Let $ \rho: (\mathbb{C}^n,0) \to (\mathbb{C}^n,0)$ be a reflection such that $f_1 \circ \rho = f_1$. Given in its coordinate functions $\rho =  (\rho_1,  \ldots , \rho_{n})$, for $x=(x_1,\ldots, x_n) \in (\mathbb{C}^n,0)$, we have $ (\rho_{1} (x))^k = x_1 ^k$, and so $\rho_1(x) = \lambda^j x_1$,  with $\lambda = e^{\frac{2 \pi i}{k}}$ and $j \in \{1, \ldots, k-1\}$. Also, $\rho_{i} (x)= x_i $, for all $  i \in \{2,\ldots, n  \}$. Hence, $\rho  \in \langle \varphi_1\rangle$.
\end{proof} 

The following result generalizes the above lemma. 
\begin{proposition}\label{prop:reflgroup} 
For a $k$-fold $f$ and $\varphi_1$ given in (\ref{eq:varphi1}), any reflection associated with $f$ belongs to $h \langle \varphi_1\rangle h^{-1}$, where $h$ is such that $f = (h,l)\cdot f_1$, with $(h,l) \in {\cal R} \times {\cal L}$.     
\end{proposition}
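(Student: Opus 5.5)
The plan is to reduce to Lemma~\ref{obsphizero} by conjugating with the right-hand coordinate change. Write $f = (h,l)\cdot f_1 = l \circ f_1 \circ h^{-1}$ with $h,l$ biholomorphisms of $(\mathbb{C}^n,0)$. Let $\rho$ be any reflection associated with $f$, so that $f\circ\rho = f$. Define
\[
\tilde\rho = h^{-1}\circ \rho \circ h .
\]
The goal is to show that $\tilde\rho$ is a reflection associated with $f_1$. Lemma~\ref{obsphizero} will then give $\tilde\rho\in\langle\varphi_1\rangle$, hence $\rho = h\tilde\rho h^{-1}\in h\langle\varphi_1\rangle h^{-1}$.

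The first step is to check that $\tilde\rho$ is a reflection. Conjugation by a biholomorphism preserves the order: $\tilde\rho^{\,j} = h^{-1}\rho^{j}h$, so $\tilde\rho$ has the same finite order $r\ge 2$ as $\rho$. It also carries fixed-point sets biholomorphically: ${\rm Fix}(\tilde\rho) = h^{-1}({\rm Fix}(\rho))$. This is a submanifold of codimension $1$, because $h^{-1}$ is a biholomorphism.

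The second step is the association. From $l\circ f_1\circ h^{-1}\circ\rho = l\circ f_1\circ h^{-1}$, compose on the left with $l^{-1}$ and on the right with $h$. This gives
\[
f_1\circ (h^{-1}\rho h) = f_1 ,
\]
that is, $f_1\circ\tilde\rho = f_1$. The invertibility of $l$ is used essentially here, because it lets us cancel the left factor.

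The only point needing care is inside Lemma~\ref{obsphizero} itself. There, $(\rho_1(x))^k = x_1^k$ should force $\rho_1(x) = \lambda^j x_1$ for a single constant $j$ on the connected germ neighbourhood. This holds because $\rho_1(x)/x_1$ is a holomorphic function (since $x_1$ divides $\rho_1$) taking values in the discrete set of $k$-th roots of unity, so it is constant. Since that lemma is already established, the proposition follows by the conjugation argument above, and I expect no real obstacle.
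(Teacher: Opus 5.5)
Your proposal is correct and follows the paper's proof: conjugate $\rho$ by $h$ to get $f_1\circ(h^{-1}\circ\rho\circ h)=f_1$, then invoke Lemma~\ref{obsphizero}. Your extra checks, that $h^{-1}\circ\rho\circ h$ is again a reflection and that $\rho_1(x)/x_1$ is a constant root of unity, fill in details the paper leaves implicit.
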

\begin{proof} Trivially, elements in $h \langle \varphi_1\rangle h^{-1}$ are reflections associated with $f$. Now, let $\rho$ be a reflection such that $f \circ \rho = f$. Then $f_1 \circ(h^{-1} \circ \rho \circ h) =f_1$, which, together with Lemma~\ref{obsphizero}, implies that $\rho \in h \langle \varphi_1\rangle h^{-1}$. 
\end{proof}

\begin{example}
\label{Ex1-Fold-Ref}
We  compute the reflection group of the $4$-fold $f: (\mathbb{C}^3,0) \to \mathbb{C}^3,0)$, 
\begin{equation} \label{eq: example}
f(x_1,x_2,x_3)=\big((x_1+x_3)^2-x_3^4,\ x_3^4+x_1+x_3, \ 2x_2 \big).
\end{equation}
We have $f= (h,l) \cdot f_1$, where $f_1$ is given in (\ref{eq:f1}), with $l(x_1,x_2,x_3)=(x_2^2-x_1,x_1+x_2,2x_3)$ and $h(x_1,x_2,x_3)=(x_1+x_2, x_3, -x_1)$. For $\varphi_1$ as in (\ref{eq:varphi1}) with $\lambda = i$, it follows from Proposition~\ref{prop:reflgroup} that the group generated by $\varphi = h \circ \varphi_1 \circ h^{-1}$, $$\varphi(x_1,x_2,x_3)=(x_1+(1-i)x_3,\ x_2, \ ix_3),$$ is the group of  reflections associated with $f$.  	   
\end{example}

The next result reveals how the orbit of $k$-fold singularities under ${\cal A}$-equivalence is stratified as a set of $k$-folds 
associated with reflections.

\begin{proposition}
\label{cor:groupreflections}
Let $f$ and $g$ be $k$-fold singularities with $g=(h,l)\cdot f$, for $(h,l) \in {\cal R} \times {\cal L}$. Then the groups of reflections associated with $f$ and $g$ are conjugate by $h$; more precisely, if $\langle \varphi \rangle$ is the group of reflections associated with $f$, then  $ h \langle \varphi \rangle h^{-1}$ is the group of reflections associated with  $g$. 
\end{proposition}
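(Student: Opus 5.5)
The plan is to prove the statement directly from Definition~\ref{def: association} and the action $(h,l)\cdot f = l\circ f\circ h^{-1}$, using Proposition~\ref{prop:reflgroup} only to know that the set of reflections associated with a $k$-fold, together with the identity, is a cyclic group. Let $R(f)$ denote the set of reflections associated with $f$, and assume $R(f) = \langle\varphi\rangle$ minus the identity; by Proposition~\ref{prop:reflgroup} such a $\varphi$ exists, namely a conjugate of $\varphi_1$. I will show that conjugation by $h$ maps $R(f)$ bijectively onto $R(g)$, where $g = l\circ f\circ h^{-1}$.

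First, let $\rho \in R(f)$. Then $h\circ\rho\circ h^{-1}$ is a biholomorphism of the same finite order as $\rho$. Its fixed-point set is $h({\rm Fix}(\rho))$, which is again a submanifold of codimension $1$ because $h$ is a biholomorphism. Hence $h\circ\rho\circ h^{-1}$ is a reflection. Moreover,
\[ g\circ (h\rho h^{-1}) = l\circ f\circ h^{-1}\circ h\circ \rho\circ h^{-1} = l\circ f\circ\rho\circ h^{-1} = l\circ f\circ h^{-1} = g, \]
so $h R(f) h^{-1}\subseteq R(g)$.

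Conversely, let $\sigma\in R(g)$. Then $f = l^{-1}\circ g\circ h = (h^{-1},l^{-1})\cdot g$, so the same computation with $(h^{-1},l^{-1})$ in place of $(h,l)$ shows $h^{-1}\sigma h \in R(f)$, that is, $\sigma\in hR(f)h^{-1}$. Therefore $R(g) = hR(f)h^{-1}$. Since conjugation by $h$ is a group isomorphism, $h\langle\varphi\rangle h^{-1} = \langle h\varphi h^{-1}\rangle$, which is the group of reflections associated with $g$.

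There is no real obstacle here. The only point needing care is checking that conjugation preserves the defining properties of a reflection, namely finite order and a codimension-one fixed-point submanifold. Both follow at once from $h$ being a biholomorphism, via ${\rm Fix}(h\rho h^{-1}) = h({\rm Fix}(\rho))$.
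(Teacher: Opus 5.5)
Your proof is correct, but it takes a different route from the paper's. The paper never manipulates the conjugation directly. It writes $f=(h_1,l_1)\cdot f_1$, observes that then $g=(h\circ h_1,\,l\circ l_1)\cdot f_1$, and applies Proposition~\ref{prop:reflgroup} to both germs. From this it reads off the groups $h_1\langle\varphi_1\rangle h_1^{-1}$ and $(h\circ h_1)\langle\varphi_1\rangle(h\circ h_1)^{-1}=h\langle h_1\varphi_1 h_1^{-1}\rangle h^{-1}$, so everything passes through the canonical pair $(f_1,\varphi_1)$ and Lemma~\ref{obsphizero}.

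You instead prove the two inclusions $hR(f)h^{-1}\subseteq R(g)$ and $h^{-1}R(g)h\subseteq R(f)$ directly from the definition of association and the identity $f=(h^{-1},l^{-1})\cdot g$. You also check explicitly that conjugating by a biholomorphism preserves finite order and codimension-one fixed-point sets, which the paper passes over as trivial.

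Your argument is more elementary and more general. The identity $R(g)=hR(f)h^{-1}$ holds for arbitrary map-germs and arbitrary $(h,l)\in\mathcal{R}\times\mathcal{L}$. The $k$-fold hypothesis, via Proposition~\ref{prop:reflgroup}, is needed only so that $R(f)\cup\{\mathrm{id}\}$ is cyclic, i.e.\ so that ``the group of reflections associated with $f$'' makes sense. The paper's version is shorter given the preceding proposition and names the generator explicitly as a conjugate of $\varphi_1$, but it relies on the full description of the reflections associated with $f_1$, which your route avoids.
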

\begin{proof}
It follows from Proposition~\ref{prop:reflgroup} that  the group of reflections associated with $f$ is  $h_1\langle \varphi_1 \rangle h_1^{-1}$, for some $h_1 \in {\cal R}$. Applying now this proposition to $g$, it follows that 
\[ (h \circ h_1) \langle \varphi_1 \rangle (h\circ h_1)^{-1} = h \langle h_1 \varphi_1 h_1^{-1} \rangle h^{-1} \] is the group of reflections associated with $g$, which gives the result. 
\end{proof}
We now present another consequence of Proposition~\ref{prop:reflgroup}, which generalizes (\ref{eq: sigma 1 = fix 1}).

\begin{proposition}  \label{prop: sigma=fix}
	Let $f:(\mathbb{C}^n,0)\to (\mathbb{C}^n,0)$ be a $k$-fold associated with a reflection $\varphi$ on $(\mathbb{C}^n,0)$. Then $\Sigma (f) = {\rm Fix}(\varphi).$
\end{proposition}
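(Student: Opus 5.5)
Write $f=(h,l)\cdot f_1$, that is, $f=l\circ f_1\circ h^{-1}$ with $(h,l)\in{\cal R}\times{\cal L}$. We reduce the statement to the canonical case (\ref{eq: sigma 1 = fix 1}) by transporting the singular set and the fixed-point set under $h$.

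\emph{Singular set.} Since $l$ and $h^{-1}$ are biholomorphisms, the chain rule gives
\[
df(x)=dl(f_1(h^{-1}(x)))\circ df_1(h^{-1}(x))\circ dh^{-1}(x).
\]
The outer two factors are invertible, so $df(x)$ is singular if and only if $df_1(h^{-1}(x))$ is singular. Hence $\Sigma(f)=h(\Sigma(f_1))$.

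\emph{Fixed-point set.} By Proposition~\ref{prop:reflgroup}, $\varphi\in h\langle\varphi_1\rangle h^{-1}$, so $\varphi=h\circ\varphi_1^{j}\circ h^{-1}$ for some $j$. Because $\varphi$ is a reflection, it is not the identity, so $\varphi_1^j\neq \mathrm{id}$, i.e. $j\not\equiv 0 \pmod k$. Conjugation transports fixed points, so ${\rm Fix}(\varphi)=h({\rm Fix}(\varphi_1^j))$.

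\emph{Comparing the two.} We have $\varphi_1^j(x)=(\lambda^j x_1,x_2,\ldots,x_n)$ with $\lambda^j\neq 1$. Its fixed points are therefore exactly those with $x_1=0$, which gives ${\rm Fix}(\varphi_1^j)={\rm Fix}(\varphi_1)$. On the other hand, $\det df_1 = k x_1^{k-1}$ vanishes exactly on $\{x_1=0\}$. Combining with (\ref{eq: sigma 1 = fix 1}),
\[
\Sigma(f)=h(\Sigma(f_1))=h({\rm Fix}(\varphi_1))=h({\rm Fix}(\varphi_1^j))={\rm Fix}(\varphi).
\]

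The only point needing care is that $\varphi$ need not be conjugate to $\varphi_1$ itself but only to a nontrivial power $\varphi_1^j$. This is handled by the observation that every nontrivial power of $\varphi_1$ has the same fixed hyperplane $\{x_1=0\}$. The remaining steps are the routine chain-rule computation and the transport of fixed points under conjugation.
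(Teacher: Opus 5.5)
Your proof is correct and follows essentially the route the paper intends. The paper gives no written proof and presents this proposition as a direct consequence of Proposition~\ref{prop:reflgroup} together with the canonical identity (\ref{eq: sigma 1 = fix 1}). You have simply written out the transport of $\Sigma(f_1)$ and ${\rm Fix}(\varphi_1^j)$ under $h$, including the necessary observation that every nontrivial power of $\varphi_1$ has the same fixed hyperplane $\{x_1=0\}$.
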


A natural question raised here is whether, for a given reflection $\varphi$ of order $k \geq 2$, there exists a $k$-fold associated with $\varphi$. We have:

\begin{proposition} \label{prop: existence of fold}
	Given a reflection $ \varphi$ on $(\mathbb{C}^n, 0)$ of order $k \geq 2$, there is a $k$-fold associated with $ \varphi$.
\end{proposition}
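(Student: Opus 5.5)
The plan is to reduce to the canonical reflection $\varphi_1$ of (\ref{eq:varphi1}) by conjugation and then transport the canonical $k$-fold $f_1$ of (\ref{eq:f1}). Concretely, we will show that a reflection $\varphi$ of order $k$ is conjugate to a linear reflection of the form $(x_1,\ldots,x_n)\mapsto(\mu x_1,x_2,\ldots,x_n)$, where $\mu$ is a primitive $k$-th root of unity. If $h$ is a biholomorphism with $h^{-1}\circ\varphi\circ h = \psi$ of this form, then $f = f_1\circ h^{-1}$ is ${\cal A}$-equivalent to $f_1$, hence is a $k$-fold. Moreover,
\[ f\circ\varphi = f_1\circ h^{-1}\circ\varphi\circ h\circ h^{-1} = f_1\circ \psi\circ h^{-1} = f_1\circ h^{-1} = f, \]
since $(\mu x_1)^k = x_1^k$. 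This gives the required association.

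The first key step is linearization, via Bochner--Cartan averaging. Set $A = d\varphi(0)$ and
\[ H = \frac{1}{k}\sum_{j=0}^{k-1} A^{-j}\circ\varphi^{j}. \]
Then $dH(0)=I$, so $H$ is a biholomorphism, and a direct reindexing using $\varphi^k=\mathrm{id}$ gives $H\circ\varphi = A\circ H$. Thus $\varphi$ is conjugate to its linear part $A$, and $A^k=I$ because $\varphi^k = \mathrm{id}$.

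The second step is to identify $A$. Since $A$ has finite order, it is diagonalizable with eigenvalues that are $k$-th roots of unity. The conjugacy $H$ maps $\mathrm{Fix}(\varphi)$ onto $\mathrm{Fix}(A)=\ker(A-I)$, near $0$. Since $\mathrm{Fix}(\varphi)$ is a submanifold of codimension $1$, the eigenspace for eigenvalue $1$ has dimension $n-1$. So $A$ is diagonalizable with eigenvalue $1$ of multiplicity $n-1$ and one remaining eigenvalue $\mu\neq 1$. The order of $A$ equals the order of $\varphi$, which is $k$, so $\mu$ is a primitive $k$-th root of unity. A linear change of coordinates $P$ then brings $A$ to $\mathrm{diag}(\mu,1,\ldots,1)$, and we take $h = (P\circ H)^{-1}$.

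The main obstacle, though a mild one, is the careful justification of the second step. One must check that the conjugacy identifies the germ of the fixed-point set with the linear eigenspace, which is what pins down the eigenvalue-$1$ multiplicity. One must also check that order $k$ forces $\mu$ to be primitive rather than merely a $k$-th root of unity. Note that $\mu$ need not equal $\lambda=e^{2\pi i/k}$. This causes no difficulty, because $f_1$ is invariant under every $x_1\mapsto \mu x_1$ with $\mu^k=1$. Alternatively, one could replace $\varphi$ by a generator power, but this is not needed.
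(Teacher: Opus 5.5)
Your proposal is correct and takes the same route as the paper. You linearize $\varphi$ by the averaging conjugacy $H=\frac{1}{k}\sum_{j=0}^{k-1}(d\varphi(0))^{-j}\circ\varphi^{j}$, identify $d\varphi(0)$ up to linear conjugacy with $\mathrm{diag}(\mu,1,\ldots,1)=\varphi_1^{j}$ (your primitive $\mu$ is $\lambda^{j}$ with $\gcd(j,k)=1$), and take $f=f_1\circ h^{-1}$, while additionally spelling out why the linear part is a reflection of order $k$, which the paper only asserts.
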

\begin{proof} Every reflection  $\varphi$ is conjugate to its linear part $d \varphi(0)$. In fact, consider the averaging conjugacy $H: (\mathbb{C}^n,0) \to (\mathbb{C}^n,0)$, 
$$H = \frac{1}{k} \sum_{j=0}^{k-1} (d\varphi(0))^{-j} \circ \varphi^j,$$ 
to obtain  $H \circ \varphi = d\varphi(0) \circ H$. Since $d\varphi(0)$ is also a reflection of order $k$, its canonical form is $\varphi_1^j$, for some $j \in \{1, \ldots, k~-~1\}$.  Hence there exists $h \in {\cal R}$ such that $\varphi = h \circ \varphi_1^j \circ h^{-1}$, and  $ f= f_1 \circ h^{-1}$ is a $k$-fold associated with $\varphi$. \end{proof}
Now, we combine Proposition~\ref{cor:groupreflections} and Proposition~\ref{prop: existence of fold} to obtain the following: 

\begin{theorem} \label{thm: realisation}
Given $k \geq 2$, any cyclic reflection group of order $k$ is realised as the group of reflections associated with a $k$-fold singularity. 
\end{theorem}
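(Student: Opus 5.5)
Let $G$ be a cyclic reflection group of order $k$. I will first take a generator $\varphi$ of $G$. Since every element of $G$ other than the identity is a reflection, $\varphi$ is a reflection of order exactly $k$. Proposition~\ref{prop: existence of fold} then gives a $k$-fold associated with $\varphi$. Its proof actually provides more. The averaging map $H$ conjugates $\varphi$ to $d\varphi(0)$, and $d\varphi(0)$ is conjugate to $\varphi_1^j$ for some $j$. So there is $h\in\mathcal R$ with $\varphi = h\circ\varphi_1^j\circ h^{-1}$, and the candidate $k$-fold is $f = f_1\circ h^{-1} = (h,\mathrm{id})\cdot f_1$.

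The key observation concerns the exponent $j$. The order of $\varphi_1^j$ is $k/\gcd(j,k)$, and this must equal the order $k$ of $\varphi$. Hence $\gcd(j,k)=1$, so $\varphi_1^j$ generates $\langle\varphi_1\rangle$. It follows that
\[
G=\langle\varphi\rangle = h\langle\varphi_1^j\rangle h^{-1} = h\langle\varphi_1\rangle h^{-1}.
\]

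Next I identify the group of reflections associated with $f$. By Lemma~\ref{obsphizero}, this group for $f_1$ is exactly $\langle\varphi_1\rangle$; every element of $\langle\varphi_1\rangle$ trivially satisfies $f_1\circ\varphi_1^m=f_1$. Applying Proposition~\ref{cor:groupreflections} with $g=f=(h,\mathrm{id})\cdot f_1$, the group of reflections associated with $f$ is $h\langle\varphi_1\rangle h^{-1}$. Proposition~\ref{prop:reflgroup} gives the same conclusion directly. This group is $G$, so $G$ is realised by the $k$-fold $f$.

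The main obstacle is the step showing that $\gcd(j,k)=1$, i.e. that the conjugating linear model generates the whole cyclic group and not a proper subgroup. Two facts make this work: conjugacy preserves order, and the linear part of $\varphi$ has the same order $k$ as $\varphi$ (by the conjugacy via $H$). It also requires that $d\varphi(0)$ has exactly one nontrivial eigenvalue, a primitive $k$-th root of unity, so that it is diagonalisable to $\varphi_1^j$ with $\gcd(j,k)=1$. I take this last fact as given, as in the proof of Proposition~\ref{prop: existence of fold}.
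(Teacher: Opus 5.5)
Your argument is correct and is essentially the paper's proof, which simply combines Proposition~\ref{prop: existence of fold} (giving $h$ with $\varphi=h\circ\varphi_1^j\circ h^{-1}$ and the $k$-fold $f_1\circ h^{-1}$) with Proposition~\ref{cor:groupreflections} (equivalently Proposition~\ref{prop:reflgroup}), which identifies the associated group as $h\langle\varphi_1\rangle h^{-1}$. Your explicit check that $\gcd(j,k)=1$, so that this conjugate group is all of $G$ and not a proper subgroup, fills in a step the paper leaves implicit.
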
 

Based on Theorem \ref{thm: realisation} above, throughout the remainder of the paper we use the term reflection group to refer to a cyclic group generated by a reflection.

\section{$k$-folds associated with reflections of order $k$} 

In this section, we refine the stratification obtained in Proposition~\ref{cor:groupreflections}. For a fixed reflection of order $k \geq 2$, we prove that the orbit of $k$-fold singularities associated with it under ${\mathcal A}$-equivalence reduces to $\mathcal L$-equivalence (Theorem~\ref{thmlequiv} and Corollary~\ref{thm:groupequiv}).   In other words, for any reflection $\varphi$ on $(\mathbb{C}^n,0)$, being associated with $\varphi$ is clearly an invariant under $\mathcal L$-equivalence; we show that if in addition  $\varphi$ has order $k$, then this invariant is in fact complete inside the class of $k$-fold singularities. 

As a preliminary step, we establish the following lemma.

\begin{lemma}\label{lemma:mirianarrasa}
Let $ l : (\mathbb{C}^n, 0) \rightarrow (\mathbb{C}^n, 0) $ be a holomorphism such that $ g = l \circ f_i$ is a $k$-fold, for some $i = 1, \ldots, n$, with $f_i$  in \eqref{eq: fold i}. Then $l$ is a biholomorphism.
\end{lemma}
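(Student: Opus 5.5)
We will prove this through local multiplicity (local degree) of finite map-germs. By a permutation of coordinates we may assume $i=1$. For a finite holomorphic germ $F:(\mathbb{C}^n,0)\to(\mathbb{C}^n,0)$, its multiplicity $\mu(F)=\dim_{\mathbb{C}}\mathcal{O}_n/F^*\mathfrak{m}_n$ equals the number of preimages near $0$ of a generic point near $0$. Three standard facts about $\mu$ will be used:
\begin{enumerate}
\item $\mu$ is invariant under $\mathcal{A}$-equivalence.
\item $\mu$ is multiplicative under composition of finite germs.
\item $\mu(F)=1$ if and only if $F^*\mathfrak{m}_n=\mathfrak{m}_n$, i.e.\ if and only if $dF(0)$ is invertible, so that $F$ is a biholomorphism by the inverse function theorem.
\end{enumerate}

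\textbf{Step 1: $f_1$, $g$ and $l$ are finite germs.} $f_1$ is finite with $\mu(f_1)=\dim\mathcal{O}_n/\langle x_1^k,x_2,\ldots,x_n\rangle=k$. Since $g$ is a $k$-fold, $g=(h',l')\cdot f_1$ for some $(h',l')\in\mathcal{R}\times\mathcal{L}$, so $g$ is finite and $\mu(g)=k$ by fact 1. To see that $l$ is finite, we use that $f_1$ is surjective onto a neighbourhood of $0$: every small $y$ has the form $y=f_1(x)$ with $x$ small. If $l(y)=0$, then $g(x)=0$, hence $x=0$ because $g^{-1}(0)=\{0\}$ locally, and so $y=f_1(0)=0$. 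Thus $l^{-1}(0)=\{0\}$ as germs, and $l$ is finite.

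\textbf{Step 2: $\mu(l)=1$.} By fact 2,
\[
k=\mu(g)=\mu(l\circ f_1)=\mu(l)\,\mu(f_1)=k\,\mu(l),
\]
so $\mu(l)=1$. To make fact 2 transparent, count preimages of a generic point $z$ near $0$. Its $l$-preimages near $0$ are $\mu(l)$ points $y$, and we may choose $z$ so that none of them lies in the critical value set $\{y_1=0\}$ of $f_1$. Each such $y$ then has exactly $k$ preimages under $f_1$, giving $\mu(g)=k\,\mu(l)$ points in total.

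\textbf{Step 3: conclusion.} By fact 3, $\mu(l)=1$ gives $l^*\mathfrak{m}_n=\mathfrak{m}_n$. Hence $dl(0)$ is invertible, and $l$ is a biholomorphism.

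The main obstacle is justifying the multiplicativity in Step 2 carefully. The preimage-counting argument needs $z$ chosen outside the (proper analytic) set of critical values of $l$ and outside $l(\{y_1=0\})$. The second set is a proper analytic subset because $l$ is finite, so $l$ maps the hypersurface $\{y_1=0\}$ onto a set of dimension $n-1$. Once generic points are chosen this way, the identity $\mu(g)=\mu(l)\mu(f_1)$ is immediate.

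An alternative route to $\mu(l)=1$ is to show directly that $l$ is generically injective. The map $g$ identifies exactly the points of each orbit of the reflection group associated with $g$, and this group has order $k$ by Proposition~\ref{prop:reflgroup}. Since $f_1$ already identifies the $\langle\varphi_1\rangle$-orbits, $l$ cannot identify any further points.
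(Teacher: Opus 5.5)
Your proof is correct, but it takes a different route from the paper's.

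\textbf{The paper's route.} The paper works only with the linear part of $l$. It writes $(dg)_0=(dl)_0(df_1)_0$ in block form and uses $\operatorname{rank}(dg)_0=n-1$ to assume the block $D$ is invertible. Then $(dl)_0$ is invertible exactly when the Schur complement $a-BD^{-1}C$ is nonzero. Next it composes with the linear map $L$ and the source change $\phi(x,y)=(x,\alpha(x,y))$. This brings $l\circ f_1$ to the form $\bigl((a-BD^{-1}C)x^k,y\bigr)+R'(x,y)$, and it concludes $a-BD^{-1}C\neq 0$ from the $\mathcal{A}$-equivalence with $f_1$. That last step is a multiplicity argument in disguise. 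If the Schur complement vanished, the first component restricted to $y=0$ would have order at least $2k$ in $x$. The local algebra would then have dimension at least $2k>k$.

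\textbf{Your route.} You make this invariant explicit and use it globally through $\mu(g)=\mu(l)\,\mu(f_1)$. This is shorter and coordinate-free, and it proves more. Whenever $f$ is finite and $l\circ f$ is finite with $\mu(l\circ f)=\mu(f)$, the germ $l$ is biholomorphic. Your Step 1 shows $l$ is finite using only that $f$ is open and $(l\circ f)^{-1}(0)=\{0\}$, so nothing specific to $f_1$ is needed.

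\textbf{Trade-offs.} You rely on heavier standard facts:
\begin{itemize}
\item finiteness from $l^{-1}(0)=\{0\}$ (R\"uckert's Nullstellensatz);
\item the identification of $\dim_{\mathbb{C}}\mathcal{O}_n/F^*\mathfrak{m}_n$ with the generic fibre cardinality;
\item analyticity and dimension of images of analytic sets under finite maps.
\end{itemize}
The paper needs only linear algebra and one explicit change of coordinates. It also pinpoints which quantity in $(dl)_0$ would have to degenerate.

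\textbf{Your alternative route.} It also works, but the appeal to Proposition~\ref{prop:reflgroup} is unnecessary. That generic fibres of $g$ have exactly $k$ points already follows from $g$ being $\mathcal{A}$-equivalent to $f_1$.
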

\begin{proof} To ease exposition, we choose $i=1$. Write $ x = x_1$, $y = ( x_2 , \ldots, x_n)$ and $l = (dl)_0 + R ,$ where $R$ is the remainder of order greater than 1. In matricial form,  
$$(dg)_0 = (d l)_0 (df_1)_0 =  
		\left(
		\begin{array}{c|c}
			a & B\\
			\hline
			C & D
		\end{array} 
		\right)
		\left(
		\begin{array}{c|c}
			0 & 0\\
			\hline
			0 & I_{n-1}
		\end{array} 
		\right) =
		\left(
		\begin{array}{c|c}
			0 & B\\
			\hline
			0 & D
		\end{array} 
		\right),$$ for $a \in \mathbb{C}$ scalar and $B, \ C,\ D$  matrices. 
        
        Without loss of generality we can assume that $D$ is invertible, for $ g $ is a $k$-fold. 
 Hence, rank$(dl)_0$  $ = n-1$ if and only if $a - BD^{-1}C = 0$. Let us prove that $a - BD^{-1}C \neq 0$.
 
Consider the linear isomorphism whose matrix is given by \begin{equation*}
		L = \left(
		\begin{array}{c|c}
			1 & -BD^{-1} \\ \hline 0  &  D^{-1}
		\end{array}
		\right).
	\end{equation*}
	
	Then
    \[	L \circ l \circ f_1(x,y)
		= L \circ l(x^k, y)= 
		\left(
		\begin{array}{c|c}
			a - BD^{-1}C & 0  \\
			\hline
			  D^{-1}C &  I_{n-1}
		\end{array}
		\right)
		\begin{pmatrix}
			x^k\\
			y
		\end{pmatrix}
		+
		\begin{pmatrix}
			U_1 (x, y)\\
			U_2 (x,y)
		\end{pmatrix},\]
	\noindent  that is,  
    $$L \circ l \circ f_1  (x,y) = ((a-BD^{-1}C ) x^k + U_1 (x,y), \alpha (x,y)),$$ 
where $\alpha( x, y ) = D^{-1}C x^k + y + U_2 (x,y),$ 
$ U_1:(\mathbb{C}^n, 0) \rightarrow (\mathbb{C}, 0)$ and $ U_2: (\mathbb{C}^n, 0) \rightarrow (\mathbb{C}^{n-1}, 0) $ with terms of order greater than $2k$ in $x$ and greater than or equal to $1$ in $y$. We have that  $\phi (x,y) = (x,  \alpha (x,y))$ defines a biholomorphism on $(\mathbb{C}^n, 0)$ and,  moreover, $\alpha (\phi^{-1} ( x, y )) = y.$ Therefore,
	$$L \circ l \circ f_1 \circ \phi^{-1} (x,y)  =  ((a-BD^{-1}C)x^k, y) + R'(x,y),$$ where $R'(x,y) = (U_1(\phi^{-1} (x,y)),0)$ has terms of order greater than or equal to $2k$ in $x$. Now,  $L \circ l \circ f_1 \circ \phi^{-1}$ being ${\cal A}$-equivalent to $f_1$ implies that $a-BD^{-1} C \neq 0$. 
    
    Finally,  
   $ \mathrm{rank}(dl)_0 \geq \mathrm{rank}((dl)_0 d(f_1)_0) = \mathrm{rank}(dg)_0  = n-1,$ 
    and this completes the proof.
 \end{proof}

\begin{proposition}\label{propequivnsein2} Consider $k \geq 2$, $ f_1 $ and $\varphi_1$ as in (\ref{eq:f1}) and (\ref{eq:varphi1}), respectively. Let $g$ be a $k$-fold and $j \in \{ 1, \ldots, k-1\}$, with $gdc(j,k) = 1$. Then,
\begin{itemize}
    \item[(a)] $g$ is associated with $\varphi_1^j$ if and only if $g$ is $ \mathcal{L} $-equivalent to $ f_1 $;
    \item[(b)] $ g $ is associated with a reflection $ \varphi \in h \langle \varphi_1 \rangle h^{-1}$ of order $k$, for $h \in {\cal R}$, if and only if $g$ is $ \mathcal{L} $-equivalent to $ f_1 \circ h^{-1} $. 
\end{itemize}
\end{proposition}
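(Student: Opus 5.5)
For (a), the easy direction: if $g = l\circ f_1$ with $l$ a biholomorphism, then $f_1\circ\varphi_1^j = f_1$ gives $g\circ\varphi_1^j = l\circ f_1\circ\varphi_1^j = g$, so $g$ is associated with $\varphi_1^j$.

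For the converse, assume $g\circ\varphi_1^j = g$ with $\gcd(j,k)=1$. Since $j$ is coprime to $k$, $\varphi_1^j$ generates $\langle\varphi_1\rangle$, so $g\circ\varphi_1 = g$. Writing $g=(g_1,\dots,g_n)$, each component satisfies
\[
g_m(\lambda x_1, x_2,\dots,x_n) = g_m(x_1,\dots,x_n).
\]
Expand $g_m$ as a convergent power series in $x_1$ with coefficients holomorphic in $y=(x_2,\dots,x_n)$. Invariance under $x_1\mapsto\lambda x_1$ kills every monomial $x_1^r$ with $k\nmid r$, because $\lambda$ is a primitive $k$-th root of unity. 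Hence
\[
g_m(x) = l_m(x_1^k, y)
\]
for a holomorphic germ $l_m$; convergence of $l_m$ follows from that of $g_m$, for instance via the averaging $\frac1k\sum_t g_m(\lambda^t x_1,y)$ and the Cauchy estimates. Thus $g = l\circ f_1$ with $l=(l_1,\dots,l_n)$ holomorphic and $l(0)=0$. Since $g$ is a $k$-fold, Lemma~\ref{lemma:mirianarrasa} shows that $l$ is a biholomorphism, so $g$ is $\mathcal L$-equivalent to $f_1$.

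The only point needing care is the factorization through $f_1$ in the holomorphic category, that is, checking that the series in $x_1^k$ converges. This is standard: the invariant series is the average above, and substituting $u=x_1^k$ yields a power series whose radius in $u$ is the $k$-th power of the radius in $x_1$. The real content, that $l$ is invertible, is supplied by Lemma~\ref{lemma:mirianarrasa}.

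For (b), reduce to (a) by conjugation. A reflection $\varphi\in h\langle\varphi_1\rangle h^{-1}$ of order $k$ has the form $\varphi = h\varphi_1^j h^{-1}$ with $\gcd(j,k)=1$, since $\varphi_1^j$ has order $k/\gcd(j,k)$. Then
\[
g\circ\varphi = g \iff (g\circ h)\circ\varphi_1^j = g\circ h,
\]
and $g\circ h$ is still a $k$-fold. By (a) this holds if and only if $g\circ h = l\circ f_1$ for some biholomorphism $l$, that is, $g = l\circ(f_1\circ h^{-1})$. This is exactly $\mathcal L$-equivalence of $g$ with $f_1\circ h^{-1}$.
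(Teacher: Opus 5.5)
Your proposal is correct and follows essentially the same route as the paper's proof. You use coprimality to get $g\circ\varphi_1=g$, factor each component through $x_1^k$, apply Lemma~\ref{lemma:mirianarrasa} to get invertibility of $l$, and deduce (b) by conjugating with $h$; your power-series and convergence argument just fills in the factorization step that the paper states without justification.
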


\begin{proof} $(a)$ If $ g $ is a $k$-fold associated with $ \varphi_1^j$, where $gcd(j,k) = 1$, then $g \circ \varphi_1 = g$. Each coordinate function $g_m : (\mathbb{C}^n, 0)\rightarrow  (\mathbb{C},0) $ of $g$ satisfies $g_m \circ \varphi_1 = g_m$, for $m \in \{1,  \ldots, n\}$. Hence, for all $x = (x_1, \ldots x_n)$, 
$$g_m (x)=  l_m  (x_1^k, x_2,\ldots, x_n) =  l_m \circ f_1 (x),$$
for some holomorphism $l_m : (\mathbb{C}^n, 0) \rightarrow (\mathbb{C},0)$.  Now, just define $l~=~(l_1,~\ldots,~l_n)$, and  then
	$g =  ( g_1 , \ldots, g_n ) = (l_1 \circ f_1 , \ldots, l_n \circ f_1) =  l \circ f_1.$ By Lemma \ref{lemma:mirianarrasa}, $g$ is $ \mathcal{L} $-equivalent to $f_1$. The converse is immediate. 
    
$(b)$ \  Let $ g $ be a $k$-fold associated with $ \varphi = h \circ \varphi_1^j \circ h^{-1}$, with $gcd(j,k) = 1$. Then  $(g\circ h) \circ \varphi_1^j = g \circ h$ and from $(a)$  there exists $l \in {\cal L}$ such that $ g = l \circ (f_1 \circ h^{-1})$. The converse is immediate.   
        \end{proof}

\vspace{.3cm}

We now generalize Proposition~\ref{propequivnsein2}$(a)$, showing that the ${\cal A}$-equivalence class of $k$-folds associated with reflections of the same order $k$  is reduced to ${\cal L}$-equivalence:

\vspace{.2cm}

\begin{theorem}
\label{thmlequiv}
For $k \geq 2$, let $f$ be a $k$-fold associated with a reflection $\varphi$ of order $k$. Then  a $k$-fold $g$ is associated with $ \varphi$ if and only if $g$ is $ \mathcal{L}$-equivalent to $ f $.
\end{theorem}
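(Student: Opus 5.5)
The plan is to reduce the statement to Proposition~\ref{propequivnsein2}$(b)$ by writing $\varphi$ in canonical form through a suitable change of coordinates $h$.

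\textbf{Step 1: normal form for $\varphi$.} Since $f$ is a $k$-fold, we have $f = (h,l)\cdot f_1 = l\circ f_1\circ h^{-1}$ for some $(h,l)\in{\cal R}\times{\cal L}$. By Proposition~\ref{prop:reflgroup}, every reflection associated with $f$ lies in $h\langle\varphi_1\rangle h^{-1}$. Hence $\varphi = h\circ\varphi_1^j\circ h^{-1}$ for some $j\in\{1,\ldots,k-1\}$. Conjugation preserves order, and $\varphi_1^j$ has order $k/\gcd(j,k)$. Since $\varphi$ has order $k$, this forces $\gcd(j,k)=1$.

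\textbf{Step 2: the forward direction.} Suppose $g$ is a $k$-fold associated with $\varphi$. Then $g$ is associated with a reflection of order $k$ in $h\langle\varphi_1\rangle h^{-1}$. By Proposition~\ref{propequivnsein2}$(b)$, $g$ is ${\cal L}$-equivalent to $f_1\circ h^{-1}$. The same proposition applies to $f$ itself, which is associated with $\varphi$; alternatively, one sees directly that $f = l\circ(f_1\circ h^{-1})$. So $f$ is also ${\cal L}$-equivalent to $f_1\circ h^{-1}$. Since ${\cal L}$-equivalence is an equivalence relation, $g$ is ${\cal L}$-equivalent to $f$.

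\textbf{Step 3: the converse.} Suppose $g = l'\circ f$ with $l'$ a biholomorphism. Then
\[ g\circ\varphi = l'\circ f\circ\varphi = l'\circ f = g, \]
so $g$ is associated with $\varphi$. Here $g$ is automatically a $k$-fold, being ${\cal A}$-equivalent to $f$.

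\textbf{Main obstacle.} The only delicate point is Step 1. We must check that the conjugating map $h$ coming from the ${\cal A}$-equivalence $f=(h,l)\cdot f_1$ is the same $h$ that appears in Proposition~\ref{propequivnsein2}$(b)$, and that the order hypothesis gives $\gcd(j,k)=1$. This is exactly where "order $k$" is needed. Without it, $\varphi$ could be a proper power, $g$ would only be invariant under a subgroup of $\langle\varphi_1\rangle$, and the factorization through $f_1$ used in Proposition~\ref{propequivnsein2}$(a)$ would fail.
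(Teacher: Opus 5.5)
Your proof is correct and follows the paper's argument essentially verbatim: you write $f=(h,l)\cdot f_1$, use Proposition~\ref{prop:reflgroup} to get $\varphi=h\circ\varphi_1^j\circ h^{-1}$, apply Proposition~\ref{propequivnsein2}$(b)$ to obtain $g=l'\circ(f_1\circ h^{-1})$, and conclude that $g=(l'\circ l^{-1})\circ f$. Your explicit observation that the order-$k$ hypothesis forces $\gcd(j,k)=1$, which is what allows Proposition~\ref{propequivnsein2} to be invoked, is a step the paper leaves implicit.
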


\begin{proof} Write $f = (h,l)\cdot f_1$, for $(h,l) \in {\cal R} \times {\cal L}$. If $f$ is associated with $\varphi$, then by Proposition \ref{prop:reflgroup} we have $\varphi = h \circ \varphi_1^j \circ h^{-1}$ for some $j \in \{1,\ldots, k-1\}$. By Proposition \ref{propequivnsein2}$(b)$, if $g$ is a $k$-fold associated with $\varphi$, then $g = l' \circ (f_1 \circ h^{-1})$ for some $ l' \in {\cal L} $. In this case, $g = (l' \circ l^{-1}) \circ f$. Therefore, $g$ is $\mathcal{L}$-equivalent to $f$. The reciprocal of the result is straightforward.\end{proof} 

\vspace{.3cm}

\begin{example}
Consider the $4$-fold $g = f \circ h^{-1}: (\mathbb{C}^3,0) \to \mathbb{C}^3,0)$,  
$$g(x_1,x_2,x_3)= \big( (x_2-x_3)^2 - x_2^4,\ x_2^4 + x_2 - x_3, \ 2(x_1+x_3) \big),$$ and the reflection $\varphi$ associated with $f$,
where $f,h$ and $\varphi$ are given in Example \ref{Ex1-Fold-Ref}. 
By Theorem \ref{thmlequiv}, $f$ and $g$ are not $\mathcal{L}$-equivalent, since $\varphi$ is not associated with $g$. In fact, $(g \circ \varphi)(x_1,x_2,x_3)=((x_2-ix_3)^2-x_2^4,\ x_2^4+x_2-ix_3,\ 2(x_1+x_3)),$ so $g\circ \varphi \neq g$.
\end{example}

The following is just the Theorem \ref{thmlequiv} stated for reflection groups.

\begin{corollary}
  \label{thm:groupequiv}
For $k \geq 2$, let $f$ be a $k$-fold associated with a reflection group of order $k$. Then $g$ is also a $k$-fold associated with this group if and only if $g$ is $ \mathcal{L}$-equivalent to $ f $.
\end{corollary}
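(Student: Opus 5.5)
The corollary should reduce directly to Theorem~\ref{thmlequiv}; the only work is translating between a \emph{reflection group of order $k$} and a \emph{reflection of order $k$}. By the convention fixed after Theorem~\ref{thm: realisation}, a reflection group here is a cyclic group $\langle \varphi\rangle$ generated by a reflection. If this group has order $k$, then its generator $\varphi$ is a reflection of order $k$. A $k$-fold $f$ is associated with the group when $f\circ\psi=f$ for every $\psi\in\langle\varphi\rangle$. This holds if and only if $f\circ\varphi=f$, since association with $\varphi$ passes to all its powers, as remarked after Definition~\ref{def: association}. So "associated with the group $\langle\varphi\rangle$" and "associated with the reflection $\varphi$" mean the same thing.

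The plan is as follows.

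\textbf{Necessity.} Suppose $f$ and $g$ are both $k$-folds associated with $\langle\varphi\rangle$, where $\varphi$ has order $k$. Then $f\circ\varphi=f$ and $g\circ\varphi=g$. Theorem~\ref{thmlequiv}, applied to $f$ and $\varphi$, gives that $g$ is $\mathcal{L}$-equivalent to $f$.

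\textbf{Sufficiency.} Suppose $g=l\circ f$ with $l\in\mathcal{L}$. Then $g$ is again a $k$-fold, because $\mathcal{L}$-equivalence is a special case of $\mathcal{A}$-equivalence. Moreover, for every $\psi\in\langle\varphi\rangle$,
\[
g\circ\psi=l\circ f\circ\psi=l\circ f=g,
\]
so $g$ is associated with the whole group.

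\textbf{Possible subtlety.} The generator $\varphi$ of the group is not unique: any $\varphi^j$ with $\gcd(j,k)=1$ also generates it. This causes no difficulty, because association with the group does not depend on which generator is chosen, and each such generator has order $k$, so Theorem~\ref{thmlequiv} applies to it. Beyond this point there is essentially no obstacle.

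Optionally, one can note that by Proposition~\ref{prop:reflgroup} a $k$-fold associated with a group of order $k$ has exactly that group as its full group of associated reflections. This is not needed for the argument.
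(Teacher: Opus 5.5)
Your proposal is correct and follows the paper's route: the paper gives no separate argument and presents this corollary simply as Theorem~\ref{thmlequiv} restated for reflection groups. Your translation between association with $\langle\varphi\rangle$ and association with a generator $\varphi$ of order $k$, together with the trivial converse via $g\circ\psi = l\circ f\circ\psi = g$, is exactly the implicit step that restatement relies on.
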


\section{Divergent diagrams of $\underline k$-folds}

We develop in the remainder of the paper the framework and classification of 
divergent diagrams of folds on $(\mathbb{C}^n,0)$. As expected, our results recover those of \cite{MMT} on the 
classification of divergent diagrams of folds and involutions on $(\mathbb{R}^n,0)$. 
More importantly, as mentioned in the Introduction, the extension to the complex setting developed here clarifies 
some invariants appearing in the algebraic expressions of the normal forms in 
the real case, which had remained unexplained for about twenty years.\\

For $s \leq n$ and  $k_i \geq 2$, \ $i = 1, \ldots, s$, we deal with divergent diagrams of $k_i$-folds $ g_i$ on $(\mathbb{C}^n, 0)$, denoted by $(g_i)_s : (\mathbb{C}^n, 0)  \rightarrow (\mathbb{C}^{ns}, 0) $, where
	\[
		(g_i)_s (x) = (g_1 (x), \ldots, g_s (x)). \] 
 In this setting, we shall say that  $(g_i)_s$ is a divergent diagram of ${\underline k}$-folds, where ${\underline k}=(k_1, \ldots, k_s)$ is the vector of the orders. 
Now, let 
\[(\beta_i)_s = (\beta_1, \ldots, \beta_s)\]
denote an $s$-tuple of reflections  on  $(\mathbb{C}^n, 0)$ and consider the direct product of the cyclic reflection groups 
\[ G = \langle \beta_1\rangle \times \cdots \times \langle \beta_s \rangle.\] 
For short, we shall refer to $G$ as the product group $\textstyle\prod_{i=1}^s \langle \beta_i\rangle$.

If $r_i$ is the order of the reflection $\beta_i$, $i = 1, \ldots, s$, then $G$ is a group with $\textstyle\prod_{i=1}^s r_i$ elements. Recall that $G$  is cyclic if and only if gcd($r_i, r_j$) =1 for all $i \neq j$.  In this case, $(\beta_i)_s $ is a generator of $G$, whose order is 
\[
\operatorname{ord}(\beta_1,\ldots,\beta_s)
= \operatorname{lcm}(r_1,\ldots,r_s),
\]
which coincides with the order of the group. 

From the terminology of the previous sections regarding the association between $k$-folds and 
(groups of) reflections, the notion of divergent diagrams $(g_i)_s$ associated with an 
$s$-tuple $(\beta_i)_s$ of reflections is clear, and we shall use  
interchangeably the terminologies:
\begin{itemize}
    \item the divergent diagram of $\underline{k}$-folds $(g_i)_s$ is 
associated with the $s$-tuple $(\beta_i)_s$,
\item the $s$-tuple $(\beta_i)_s$ is associated 
with the divergent diagram of $\underline{k}$-folds $(g_i)_s$, 
\item the divergent diagram of $\underline{k}$-folds  $(g_i)_s$ is associated with the product group 
$G = \textstyle\prod_{i=1}^s \langle \beta_i\rangle$.\\
\end{itemize}

The equivalence relation of $s$-tuples of reflections is given by simultaneous conjugacy.

\begin{definition} \label{def:uplaequiv2}
The $s$-tuples of reflections $(\alpha_i)_s $ and $ (\beta_i)_s $ are equivalent if there exists $h \in \cal{R}$ such that $\beta_i = h \circ \alpha_i \circ h^{-1}$,  $i = 1, \ldots,s$.
\end{definition}

Clearly, if $\alpha_i$ and $\beta_i$ are of the same order, then $(\alpha_i)_s$ and $ (\beta_i)_s$ are equivalent if and only if $\langle \alpha_i\rangle$ and $\langle \beta_i\rangle$ are conjugate, and in this case we say that the product groups $P = \textstyle\prod_{i=1}^s \langle \alpha_i\rangle$ and  $G = \textstyle\prod_{i=1}^s \langle \beta_i\rangle$ are conjugate. \\

We now define the equivalence of divergent diagrams of $\underline{k}$-folds:
    
\begin{definition}\label{defequivdiagramdivdobras} Two divergent diagrams of $\underline{k}$-folds $ (p_i)_s $ and $(g_i)_s$ are equiva\-lent if there exist $l_1 ,\ldots, l_s \in {\cal L}$ and $ h \in {\cal R}$ such that $g_i = l_i \circ p_i \circ h^{-1},$  $i = 1, \ldots,s$. 
\end{definition}

\vspace{.3cm}

The following theorem is the first result relating the two previous equivalences: 

\begin{theorem}\label{teo: equivdiagrams1}
Let $ (p_i)_s $ and $(g_i)_s$ be divergent diagrams of $\underline{k}$-folds associated with the $s$-tuples of reflections $ (\alpha_i)_s $ and $ (\beta_i)_s$, respectively, where $\alpha_i$ and $\beta_i$ have order $k_i \geq 2$, for $i \in \{1, \ldots, s\}$. If $ (\alpha_i)_s $ and $ (\beta_i)_s $ are equivalent, then $ (p_i)_s $ and $ (g_i)_s$ are equivalent. \end{theorem}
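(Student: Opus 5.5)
The plan is to transport the diagram $(p_i)_s$ by the common conjugacy $h$ and then use Theorem~\ref{thmlequiv} componentwise. By hypothesis there is $h \in {\cal R}$ with $\beta_i = h \circ \alpha_i \circ h^{-1}$ for every $i = 1, \ldots, s$. We set $q_i = p_i \circ h^{-1}$ for each $i$. Being a right composition of a $k_i$-fold with a biholomorphism, each $q_i$ is again a $k_i$-fold. The point of this choice is that the $q_i$ are associated with the $\beta_i$.

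To check the association, we compute
\[
q_i \circ \beta_i = p_i \circ h^{-1} \circ h \circ \alpha_i \circ h^{-1} = p_i \circ \alpha_i \circ h^{-1} = p_i \circ h^{-1} = q_i,
\]
which uses $p_i \circ \alpha_i = p_i$. Equivalently, this is the conjugation statement of Proposition~\ref{cor:groupreflections} applied to $(h,\mathrm{id})$.

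Now fix $i$. Both $q_i$ and $g_i$ are $k_i$-folds associated with the same reflection $\beta_i$, and $\beta_i$ has order $k_i$ because $\alpha_i$ does and conjugation preserves order. Theorem~\ref{thmlequiv}, applied with $f = q_i$ and $\varphi = \beta_i$, then gives $l_i \in {\cal L}$ with $g_i = l_i \circ q_i = l_i \circ p_i \circ h^{-1}$. Since the same $h$ works for all $i$ and the $l_i$ may be chosen independently, Definition~\ref{defequivdiagramdivdobras} is satisfied, and $(p_i)_s$ and $(g_i)_s$ are equivalent.

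I expect no serious obstacle here. The only delicate point is the order hypothesis, which is exactly what Theorem~\ref{thmlequiv} requires, so we must verify that conjugation preserves the order $k_i$ (this is immediate). The argument also relies on the single conjugacy $h$ being shared by all components, which is precisely what simultaneous conjugacy in Definition~\ref{def:uplaequiv2} provides; the left changes $l_i$ are allowed to be distinct.
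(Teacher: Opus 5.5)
Your proof is correct and is essentially the paper's argument. The paper composes on the other side: it shows $g_i\circ h$ is a $k_i$-fold associated with $\alpha_i$ and compares it with $p_i$ via Theorem~\ref{thmlequiv}, whereas you push $p_i$ forward to $p_i\circ h^{-1}$, which is associated with $\beta_i$, and compare it with $g_i$. This is the same step in mirror image. Your check that conjugation preserves order is harmless but unnecessary, since $\beta_i$ is assumed to have order $k_i$.
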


\begin{proof} Consider $h \in {\cal R}$ such that $\beta_i = h \circ \alpha_i \circ h^{-1},$  $ i \in \{1, \ldots, s\}$. 
The condition $g_i \circ \beta_i = g_i$ implies that  $(g_ i \circ h) \circ \alpha_ i = g_i \circ h$. Hence, each $k_i$-fold $g_i \circ h$ is associated with $\alpha_i$.  Since $\alpha_i$ has order $k_i$ and $p_i$ is a $k_i$-fold associated with $\alpha_i$, by Theorem \ref{thmlequiv} we have that $g_i \circ h$ and $p_i$ are ${\cal L}$-equivalent, and the result follows. 
\end{proof}

\vspace{.3cm}

The next result shows that the necessary condition of Theorem~\ref{teo: equivdiagrams1} is also sufficient at the group level:

\vspace{.2cm}
 \begin{theorem}\label{teoequivdiagramas2}
Let $ (p_i)_s $ and $ (g_i)_s$  be divergent diagrams of $\underline{k}$-folds associated with  product groups $P = \textstyle\prod_{i=1}^s \langle \alpha_i\rangle$ and $G = \textstyle\prod_{i=1}^s \langle \beta_i\rangle$, respectively, where $\alpha_i$ and $\beta_i$ have order $k_i \geq 2$. Then $ (p_i)_s $ and $ (g_i)_s$ are equivalent if and only if $P$ and $  G$ are conjugate. \end{theorem}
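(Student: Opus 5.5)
The plan is to prove the two implications separately. Throughout, I use the convention that $P$ and $G$ are conjugate when there is a single $h\in{\cal R}$ with $h\langle\alpha_i\rangle h^{-1}=\langle\beta_i\rangle$ for every $i$.

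\emph{Sufficiency.} Suppose $h\langle\alpha_i\rangle h^{-1}=\langle\beta_i\rangle$ for all $i$. Each $h\circ\alpha_i\circ h^{-1}$ then lies in $\langle\beta_i\rangle$ and has order $k_i$, so it generates $\langle\beta_i\rangle$. Hence $g_i$, which is invariant under every element of $\langle\beta_i\rangle$, is associated with $\beta_i':=h\circ\alpha_i\circ h^{-1}$. The diagram $(g_i)_s$ is therefore associated with $(\beta_i')_s$, a tuple equivalent to $(\alpha_i)_s$ with each $\beta_i'$ of order $k_i$. Theorem~\ref{teo: equivdiagrams1} then gives that $(p_i)_s$ and $(g_i)_s$ are equivalent. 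An equivalent route is to apply Theorem~\ref{thmlequiv} directly to $g_i\circ h$ and $p_i$, both associated with $\alpha_i$.

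\emph{Necessity.} Suppose $g_i=l_i\circ p_i\circ h^{-1}$ for all $i$, with a common $h\in{\cal R}$ and $l_i\in{\cal L}$. Then $g_i=(h,l_i)\cdot p_i$. By Proposition~\ref{cor:groupreflections}, the group of reflections associated with $g_i$ is $h\langle\alpha_i\rangle h^{-1}$, where I use that $\langle\alpha_i\rangle$ is the full group of reflections associated with $p_i$.

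That last point needs a check. By Proposition~\ref{prop:reflgroup}, the full group associated with $p_i$ is $h_i\langle\varphi_1\rangle h_i^{-1}$, which is cyclic of order $k_i$. It contains $\alpha_i$, which has order $k_i$, so it equals $\langle\alpha_i\rangle$. The same argument shows that the full group associated with $g_i$ is $\langle\beta_i\rangle$. Comparing the two descriptions gives $h\langle\alpha_i\rangle h^{-1}=\langle\beta_i\rangle$ for every $i$ with the same $h$, so $P$ and $G$ are conjugate.

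The main subtlety is this identification of $\langle\alpha_i\rangle$ with the full reflection group of $p_i$, which rests on the order hypothesis $k_i$; everything else follows directly from the earlier results. A secondary point is that conjugacy of $P$ and $G$ must be read as conjugacy by one common $h$ acting factorwise, matching the single $h\in{\cal R}$ in Definition~\ref{defequivdiagramdivdobras}.
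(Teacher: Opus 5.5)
Your proof is correct and follows the same route as the paper: necessity via Proposition~\ref{cor:groupreflections} and sufficiency via Theorem~\ref{teo: equivdiagrams1}. Your two added checks make explicit steps the paper leaves implicit. The first is that $\langle\alpha_i\rangle$ is the full reflection group of $p_i$ because $\alpha_i$ has order $k_i$. The second is that the converse should apply Theorem~\ref{teo: equivdiagrams1} to $(h\circ\alpha_i\circ h^{-1})_s$ rather than to $(\beta_i)_s$, since conjugate cyclic groups need not have conjugate chosen generators.
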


\begin{proof} Let $ (p_i)_s $ and $ (g_i)_s $ be equivalent divergent diagrams of $\underline{k}$-folds associated with the groups $P$ e $G$, respectively. By Proposition~\ref{cor:groupreflections}, the groups $\langle \alpha_i \rangle$ and $\langle \beta_i \rangle$ are conjugate, $i \in \{1, \ldots, s\}$, and so $P$ and $G $ are conjugate. The converse follows directly from Theorem~\ref{teo: equivdiagrams1}. 
\end{proof}

\subsection{Normal forms under transversality}
\label{sub:normalforms}

The aim of this subsection is to derive normal forms for divergent diagrams and their corres\-ponding tuples of reflections under the standard transversality and linearity assumptions. We start by recalling that, for $ s\leq n $,
a set $\Lambda_s = \{\alpha_ 1, \ldots , \alpha_ s \}$ of reflections on $(\mathbb{C}^n,0)$ is transversal if 
\begin{itemize}
\item ${\rm Fix}({\alpha_i})$ is transversal to ${\rm Fix }({\alpha_j})$ at $0$ for $i \neq j$;
\item $ {\rm codim}  \ {\bigcap_{i = 1 }^s {\rm T_0Fix }({\alpha_i})} = \sum_{i= 1}^s {\rm codim}({\rm Fix }({\alpha_i})) = s,$ 
\end{itemize} where ${\rm T_0Fix }({\alpha_i})$ denotes the tangent space to ${\rm Fix }({\alpha_i})$ at $0$. Generally speaking, this means that the fixed-point hyperplanes intersect in the best way possible, in the sense that they do not share extra directions: the normal directions to the fixed-point subspaces  are linearly independent, so no reflection is `redundant' with respect to the others.

A benefit of the transversality condition is that it permits the simultaneous `straightening'  of the fixed-point subspaces of a transversal family of reflections to the canonical hyperplanes by means of simultaneous conjugacy. This in turn provides a simple algebraic expression for a pre-normal form,  given in (\ref{eqpsi}) in the next proposition. 

\vspace{.2cm}

\begin{proposition}     \label{prop:equivalenciadereflexões}
Let $\Lambda_s = \{\alpha_ 1 , \ldots , \alpha_ s \}$ be a transversal set of linear reflections of orders $k_1, \ldots, k_s \geq 2$, respectively.  Let $\lambda_i$ be the eigenvalue of $\alpha_i$, with $\lambda_i \neq 1$. Then  $(\alpha_i)_s$ is equivalent to $(\beta_i)_s$, where
	\begin{equation}\label{eqpsi}
		\beta_ i (x_1, \ldots , x_n ) = (x_1 + a_{i1} x_i, \ldots , \lambda_i x_i, \ldots ,x_n + a_{in} x_i ), 
	\end{equation}
	for $a_{i j} \in \mathbb{C}$,  $ 1 \leq i \leq s,$ \ $1 \leq j \leq  n $,  $ j \neq i $.
\end{proposition}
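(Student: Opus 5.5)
Since every $\alpha_i$ is linear, I will look for a \emph{linear} conjugacy $h$, i.e.\ a change of basis of $\mathbb{C}^n$ in which all the $\alpha_i$ have the matrix form in (\ref{eqpsi}). Each $\alpha_i$ is a linear map of finite order $k_i$, hence diagonalizable. Its fixed-point set is a hyperplane, so its eigenvalue $1$ has multiplicity $n-1$, and the remaining eigenvalue is $\lambda_i \neq 1$. I will then write
\[
\alpha_i(x) = x + (\lambda_i - 1)\,\ell_i(x)\, v_i ,
\]
where $v_i$ spans the $\lambda_i$-eigenline of $\alpha_i$ and $\ell_i$ is a linear form with $\ker \ell_i = {\rm Fix}(\alpha_i)$ and $\ell_i(v_i) = 1$. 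This normalization is possible because $v_i \notin {\rm Fix}(\alpha_i)$, so $\ell_i(v_i)\neq 0$.

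The key step is to choose coordinates in which $\ell_i$ becomes the coordinate function $x_i$ for each $i=1,\ldots,s$. Transversality gives ${\rm codim}\bigcap_{i=1}^s \ker \ell_i = s$, so the forms $\ell_1,\ldots,\ell_s$ are linearly independent in $(\mathbb{C}^n)^*$. I will complete them to a basis $\ell_1,\ldots,\ell_s,\ell_{s+1},\ldots,\ell_n$ and set $h = (\ell_1,\ldots,\ell_n)$, which is a linear isomorphism. Then for each $i$,
\[
h\circ\alpha_i\circ h^{-1}(y) = y + (\lambda_i-1)\, y_i\, w_i, \qquad w_i = h(v_i).
\]
The $i$-th component of $w_i$ equals $\ell_i(v_i)=1$. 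Writing $w_i = (w_{i1},\ldots,w_{in})$ with $w_{ii}=1$, the $i$-th coordinate of the conjugated map is $y_i + (\lambda_i - 1)y_i = \lambda_i y_i$. For $j\neq i$, the $j$-th coordinate is $y_j + a_{ij}y_i$ with $a_{ij} = (\lambda_i-1)w_{ij}$. This is exactly the form (\ref{eqpsi}), and the same $h$ works for every $i$ at once, which is what simultaneous conjugacy requires.

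The main obstacle, though a mild one, is justifying the rank-one description of each $\alpha_i$: that a finite-order linear map fixing a hyperplane has the form identity plus $(\lambda_i-1)\ell_i(x)v_i$ with $\ell_i(v_i)=1$. This follows from diagonalizability, since finite order means $\alpha_i^{k_i} = I$ and so the minimal polynomial has distinct roots. The other point to make explicit is that transversality of the fixed hyperplanes amounts to linear independence of the defining forms $\ell_i$. Both facts are standard linear algebra, so the proof should be short.
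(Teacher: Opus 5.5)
Your proof is correct and is essentially the paper's argument: you complete the independent defining forms of the fixed hyperplanes to a basis of $(\mathbb{C}^n)^*$ and conjugate by the induced linear isomorphism $h=(\ell_1,\ldots,\ell_n)$, which sends each ${\rm Fix}(\alpha_i)$ to $\{x_i=0\}$. The only difference is how you read off the matrix: you use the rank-one decomposition $\alpha_i(x)=x+(\lambda_i-1)\ell_i(x)v_i$, which also gives $a_{ij}=(\lambda_i-1)w_{ij}$, whereas the paper uses $\beta_i(e_j)=e_j$ for $j\neq i$ and identifies $a_{ii}=\lambda_i$ as the only eigenvalue different from $1$.
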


    \begin{proof} For $i = 1, \ldots, s$, we write the fixed-point hyperplanes ${\rm Fix}({\alpha_ i}) \subseteq (\mathbb{C}^n,0)$ as 
	\begin{equation}\label{eqball}
		{\rm Fix}({\alpha_i }) = \ker (F_i ),
	\end{equation} for linear functionals  $F_i  : (\mathbb{C}^n,0)  \to (\mathbb{C}, 0)  $. These 
    are linearly independent by transversality of $\Lambda_s$ and we complete this set to have a basis $\mathcal{B}$
of $ (\mathbb{C}^n )^*$. Consider the linear isomorphism $h$  sending the dual basis of $\mathcal{B}$
 to the canonical basis of $\mathbb{C}^n$. 
 For $i = 1, \ldots, s$, consider $\beta_i = h \circ \alpha_i \circ h^{-1}$, which is obviously a linear reflection of order $k_i$ whose eigenvalues coincide with the eigenvalues of $\alpha_i$. From (\ref{eqball}),
 $$			{\rm Fix}({\beta_i}) = h({\rm Fix}({\alpha_i})) = \{x=(x_1,\ldots, x_n) \in \mathbb{C}^n;\ x_i =0\},$$ for $ i = 1 , \ldots, s$. Hence, $\beta_i (e_j ) = e_j,$ for $j \neq i$. Now we just write $	\beta_i (e_i ) = ( a_ {i 1}, \ldots, a_ {i n})$ to get 
 $			\beta_i (x) = (x_ 1 + a_ {i 1} x_ i, \ldots, a_ {ii}  x_ i , \ldots,  x_ n + a_ {i n} x_ i)$.  But  $a_{ii}$ is the only eigenvalue of $\beta_i$ distinct from 1, so $a_{ii} = \lambda_i$ and, therefore, $\beta_i$ has the form given by (\ref{eqpsi}).
            \end{proof}

            \quad

In the next result, we solve the recognition problem  of transversal $s$-tuples of linear reflections based on the parameters that appear in the pre-normal form (\ref{eqpsi}) of Proposition \ref{prop:equivalenciadereflexões}.	

\begin{theorem}\label{thm: matrix} Let $\lambda_i$ be a primitive $k_i$-th root of unity. Consider the $s$-tuples of transversal linear reflections $(\beta_{i_a})_s$ and $(\beta_{i_b})_s$, where
\[
	     \beta_{i_a} (x) = (x_1  + a_{i1} x_i, \ldots , \lambda_i x_i, \ldots ,x_n  + a_{in} x_i )\] and \[
     \beta_{i_b} (x) = (x_1  + b_{i1} x_i, \ldots , \lambda_i x_i, \ldots ,x_n  + b_{in} x_i )
	\] have order $k_i \geq 2$, 
	for all $i \in \{1, \ldots, s\}$, with $ a_{ij}, b_{ij} \in \mathbb{C}  $ and $j \neq i $. Then $(\beta_{i_a})_s$ is equivalent to $(\beta_{i_b})_s$ if and only if there exists an invertible  matrix $C$ with elements in $\mathbb{C}$ given by 
    
	\begin{equation}
	    \label{Matrix-H}
        C =\left(
			\begin{array}{cccc|ccc}
			  \xi_1 & 0 & \cdots & 0 & 0 & \cdots & 0  \\
				0 & \xi_2 & \cdots & 0 & \vdots & \vdots  & \vdots  \\
				\vdots & \vdots & \ddots & \vdots & \vdots & \vdots  & \vdots   \\
				0 & 0 & \cdots  & \xi_s & 0 & \cdots  & 0  \\\hline
				c_{s+1,1} & c_{ s + 1,2} & \cdots  & c_{ s + 1 , s 	}  & c_{s+ 1 , s+1} & \cdots & c_{s+1 , n}   \\
				\vdots & \vdots & \ddots & \vdots & \vdots & \ddots  & \vdots   \\
				c_{n1} & c_{n2}  & \cdots  & c_{n s} & c_{n ,s+1} & \cdots & c_{n n}  \\
			\end{array}
			\right)
	\end{equation} 
    such that $\xi_1 = 1$ and 
    $$b_{ij}=\left\{\begin{array}{cl}
     \dfrac{\xi_j }{\xi_i } a_{ij}, & \mbox{for all} \ \ 1  \leq i, j \leq s, \ \mbox{with}\ j \neq i \\     
     (\lambda_1 - 1) c_{j1} + \displaystyle \sum_{k = 2} ^n a_{1 k} c_{jk}, & \mbox{for} \ \ i = 1 \ and \ s+1 \leq  j \leq n \\
	\dfrac{1}{\xi_i}((\lambda_i - 1) c_{ji} + \sum_{\substack{k = 1 \\ k \neq i}} ^n a_{i k} c_{jk}) , & 
	\mbox{for} \ \  2 \leq i \leq s \ \mbox{and} \ s+1 \leq  j \leq n.
    \end{array} \right.$$
\end{theorem}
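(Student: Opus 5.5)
The plan is to reduce the equivalence to a linear matrix identity and then read off the entries of the conjugating matrix column by column.

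\emph{Reduction to linear conjugacies.} Suppose $h\in\mathcal R$ satisfies $\beta_{i_b}\circ h=h\circ\beta_{i_a}$ for all $i$. Differentiating at $0$ and using that both tuples are linear gives $\beta_{i_b}\circ H=H\circ\beta_{i_a}$ with $H=dh(0)$ invertible. So equivalence holds if and only if there is an invertible matrix $H$ with $B_i^b H=HB_i^a$ for $i=1,\ldots,s$. Write each reflection as a rank-one perturbation of the identity,
\[
B_i^{a}=I+w_i^{a}e_i^{T},\qquad w_i^{a}=(a_{i1},\ldots,\lambda_i-1,\ldots,a_{in})^{T},
\]
and similarly for $B_i^b$ with $w_i^b$. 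The identity then reads $w_i^b\,(e_i^TH)=(Hw_i^a)\,e_i^T$.

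\emph{Shape of $H$.} The left side of this identity is $w_i^b$ times the $i$-th row of $H$. The right side is a matrix supported only in column $i$, and $w_i^b\neq 0$ because its $i$-th entry is $\lambda_i-1\neq0$. Hence the $i$-th row of $H$ vanishes outside column $i$. Equivalently, $H$ preserves ${\rm Fix}(\beta_{i_a})={\rm Fix}(\beta_{i_b})=\{x_i=0\}$, as Proposition~\ref{prop:reflgroup}-type arguments already suggest. Doing this for $i\le s$ shows that the first $s$ rows of $H$ are $\mathrm{diag}(\xi_1,\ldots,\xi_s)$ padded with zeros, with each $\xi_i\neq0$ by invertibility. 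This is exactly the block shape of $C$ in (\ref{Matrix-H}). Since any nonzero multiple of $H$ also conjugates, we may rescale so that $\xi_1=1$.

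\emph{Extracting the formulas.} With $e_i^TH=\xi_ie_i^T$, the identity collapses to the vector equation $\xi_i w_i^b=Hw_i^a$, which we compare componentwise.
\begin{itemize}
\item For a row $j\le s$, $j\ne i$, we get $\xi_i b_{ij}=\xi_j a_{ij}$.
\item For row $j=i$, we get $\xi_i(\lambda_i-1)=\xi_i(\lambda_i-1)$, which is automatically consistent because both tuples share the same eigenvalues $\lambda_i$.
\item For a row $j>s$, we get $\xi_i b_{ij}=(\lambda_i-1)c_{ji}+\sum_{k\ne i}a_{ik}c_{jk}$. With $\xi_1=1$ this gives the stated formulas for $i=1$ and for $2\le i\le s$.
\end{itemize}
Conversely, given such a $C$, the same computation read backwards shows $B_i^bC=CB_i^a$ for every $i$, so $h=C$ realises the equivalence.

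The main step needing care is justifying that $H$ must have the block form. This follows from the rank-one structure above (non-vanishing of $\lambda_i-1$), together with the bookkeeping of the normalisation $\xi_1=1$, which is a free scaling; the rest is routine matrix multiplication.
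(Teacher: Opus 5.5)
Your proof is correct and follows the paper's argument. You reduce to the matrix identity $[\beta_{i_b}]C = C[\beta_{i_a}]$, use $\lambda_i \neq 1$ to force the $i$-th row of $C$ to be $\xi_i e_i^T$ for $i \leq s$, normalise $\xi_1 = 1$ by scaling, and compare entries; your rank-one bookkeeping $B_i = I + w_i e_i^T$ and the explicit linearisation step $H = dh(0)$ (which the paper compresses into ``by definition'') tidy the computation without changing the route.
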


\begin{proof} By definition, $(\beta_{i_a})_s$ is equivalent to $(\beta_{i_b})_s$ if and only if there exists an invertible matrix $C = (c_{jk})_{n \times n }$ such that \begin{equation}\label{eqfourballs}
		[\beta_{i_b}] C  =  C [\beta_{i_a}]	\end{equation} 
for all $i \in \{1, \ldots, s\}$. If we set \begin{equation*}
  \label{Exp-matrices}
   d_{jl}=\left\{\begin{array}{cc}
     c_{jl}+ c_{il} b_{ij},  & \mbox{for} \ j \neq i \\
     \lambda_i c_{il},  & \mbox{for} \ j=i
  \end{array} \right. \ \ \mbox{and} \ \      
  e_{jl}=\left\{\begin{array}{cc}
     c_{jl},  & \mbox{for} \ l \neq i \\     \displaystyle\sum_{\substack{k = 1 \\ k \neq i}}^n a_{ik}c_{jk}+\lambda_ic_{ji},  & \mbox{for} \ l=i,
  \end{array}\right.
  \end{equation*}
then  $[\beta_{i_b}] C = (d_{jl})_{n\times n}$ and $ C [\beta_{i_a}] = (e_{jl})_{n\times n}$. For  $i \in \{1, \ldots, s\}$ and $j \in \{ 1, \ldots, n\}$, it follows from (\ref{eqfourballs}) that $c_{ij} = 0$ for $j \neq i$. Since $C$ is invertible, we have $\xi_i := c_{ii} \neq 0$, and clearly we can assume $\xi_1 = 1 $.
For $i, j \in \{1, \ldots, s\}$, with $ j \neq i $, 
	\[
		c_{j i} + \xi_i b_{ij} = \sum_{\substack{k = 1 \\ k \neq i}} ^n a_{ik} c_{jk} + \lambda_i c_{ji},
	\]
    with $c_{ji} = 0$. Hence,  $b_{ij} = \dfrac{\xi_j}{\xi_{i}} a_{ij},$ for all $i, j \in \{1, \ldots, s\}, \ j \neq i .$ 
    
    For $i = 1$ and $s+1 \leq j \leq n$, we obtain     
	$\sum_{k = 2} ^ n  a_{1k} c_{j k} + \lambda_1 c_{j1 } = c_{j 1} + \xi_{1} b_{1 j},$ that is,
    \[ b_{1j} =(\lambda_1 - 1)c_{j1} + \sum_{\substack{k = 2}} ^ n  a_{1k} c_{j k}. \] 
Similarly, for $2 \leq i \leq s $ and $s+1 \leq j \leq  n$, we obtain 
$		\sum_{\substack{k = 1 \\ k \neq i}} ^ n  a_{ik} c_{j k} + \lambda_i c_{ji }  
		=  c_{j i } + \xi_i b_{ij},$
        that is,  
        \[ b_{ij} 
		=\dfrac{1}{\xi_i}((\lambda_i - 1) c_{ji} + \sum_{\substack{k = 1 \\ k \neq i}} ^n a_{i k} c_{jk}). \] 
        \end{proof}
We close this subsection by presenting pre-normal forms of divergent diagrams.
From now on, let $f_i$ denote the canonical $k_i$-fold  
        \begin{equation}\label{eq:classicfold}
		f_i(x) = (x_1, \ldots, x_ i^{k_i}, \ldots , x_n), \end{equation}
consistent with the notation of \eqref{eq: fold i} in Section~\ref{sec:kfolds}, where only a single fold is considered. Similarly, we update the notation of $\varphi_i$ 
 introduced in (\ref{eq: reflection i}) with $\lambda = \lambda_i$, to denote the reflection of order $k_i$
  \begin{equation}\label{eq:Abelian}
		\varphi_{i}(x) = (x_1, \ldots, \lambda_i x_ i, \ldots , x_n).\end{equation}  

 \quad
            
\begin{proposition}\label{prop:equivcomreflexoes}
	Let $\Lambda_s = \{\alpha_ 1, \ldots, \alpha_ s \}$ be a transversal set of linear reflections of orders $k_1, \ldots, k_s \geq 2$, respectively. Let $\lambda_i$ be the eigenvalue of $\alpha_i$, with $\lambda_i \neq 1$.  A divergent diagram of $\underline{k}$-folds $(p_i)_s$ associated with $(\alpha_i)_s$ is equivalent to the divergent diagram of $\underline{k}$-folds $(g_i)_s$ associated with $(\beta_i)_s$, where $\beta_i $ is given in (\ref{eqpsi}) and\begin{equation}\label{eq: normalform_g}
		g_i(x) = \bigl(x_ 1  + \dfrac{a_ {i1}}{1-\lambda_i} x_ i, \ldots , x_ i ^{k_i} , \ldots ,x_ n  + \dfrac{a_ {in}}{1-\lambda_i} x_ i \bigr),
	\end{equation} for $a_{i j} \in \mathbb{C}$, with $ i \in \{1 , \ldots, s\}$, $j \in \{ 1, \ldots, n \}$ and $ j \neq i $. 
\end{proposition}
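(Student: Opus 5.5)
Our plan is to combine Proposition~\ref{prop:equivalenciadereflexões} with Theorem~\ref{teo: equivdiagrams1}. By Proposition~\ref{prop:equivalenciadereflexões}, $(\alpha_i)_s$ is equivalent to $(\beta_i)_s$, where each $\beta_i$ has the form (\ref{eqpsi}) and has order $k_i$. Theorem~\ref{teo: equivdiagrams1} then says that any divergent diagram of $\underline{k}$-folds associated with $(\alpha_i)_s$ is equivalent to any divergent diagram of $\underline{k}$-folds associated with $(\beta_i)_s$. So it suffices to check two things about $g_i$ in (\ref{eq: normalform_g}): that it is a $k_i$-fold, and that $g_i \circ \beta_i = g_i$.

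For the first point, we write $g_i = f_i \circ T_i$, where $f_i$ is the canonical fold (\ref{eq:classicfold}). Here $T_i$ is the linear map fixing the $i$-th coordinate and sending $x_j \mapsto x_j + \frac{a_{ij}}{1-\lambda_i} x_i$ for $j \neq i$. Then $T_i$ is unipotent (identity plus a rank-one nilpotent part), hence invertible, so $g_i = (T_i^{-1}, \mathrm{id})\cdot f_i$ is $\mathcal{A}$-equivalent to $f_i$, which is a $k_i$-fold. Note that $1-\lambda_i \neq 0$ because $\lambda_i \neq 1$.

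For the second point, we compute directly. The $i$-th coordinate of $g_i\circ\beta_i$ is $(\lambda_i x_i)^{k_i} = x_i^{k_i}$, since $\lambda_i$ is a $k_i$-th root of unity (as $\beta_i$ has order $k_i$). For $j \neq i$, the $j$-th coordinate is
\[
x_j + a_{ij}x_i + \frac{a_{ij}}{1-\lambda_i}\lambda_i x_i = x_j + a_{ij}x_i\,\frac{1-\lambda_i+\lambda_i}{1-\lambda_i} = x_j + \frac{a_{ij}}{1-\lambda_i}x_i,
\]
which is the $j$-th coordinate of $g_i$. Hence $g_i\circ\beta_i = g_i$.

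The only step needing care is this invariance identity, where the coefficient $\frac{1}{1-\lambda_i}$ is chosen exactly so that the term $a_{ij}x_i$ cancels. Conceptually, $\frac{a_{ij}}{1-\lambda_i}$ is the $j$-th coordinate of the eigenvector direction of $\beta_i$ for $\lambda_i$, up to scaling. Also, the order of $\beta_i$ equals the order of $\alpha_i$ (they are conjugate), as Theorem~\ref{teo: equivdiagrams1} requires. Applying that theorem completes the proof.
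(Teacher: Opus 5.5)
Your proposal is correct and follows essentially the same route as the paper: you reduce to the tuple $(\beta_i)_s$ of (\ref{eqpsi}), write $g_i=f_i\circ T_i$ where $T_i$ is exactly the inverse of the paper's $h_i$, and conclude with Theorem~\ref{teo: equivdiagrams1}. The only cosmetic difference is that you check $g_i\circ\beta_i=g_i$ by direct computation, whereas the paper gets it from the conjugacy $\beta_i=h_i\circ\varphi_i\circ h_i^{-1}$.
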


\begin{proof} By Proposition \ref{prop:equivalenciadereflexões}, $ ( \alpha_i)_s $ is equivalent to $ (\beta_i)_s $, for $\beta_i$ defined in \eqref{eqpsi}.  
For each $i \in \{ 1, \ldots , s\} $, consider $g_i$ as in \eqref{eq: normalform_g} and define $ h_i: (\mathbb{C}^n,0) \to (\mathbb{C}^n,0)$ 
by
\[
		h_i (x) =  \bigl(x_ 1 - \dfrac{a_ {i1}}{1-\lambda_i} x_ i , \ldots , x_ i , \ldots , x_ n - \dfrac{a_ {in}}{1-\lambda_i} x_ i \bigr).
\] 
For the reflection $\varphi_i$ and the $k_i$-fold $f_i$ in (\ref{eq:Abelian}) and (\ref{eq:classicfold}), respectively, we have  $\beta_i = h _ i \circ \varphi_i  \circ h_i^{-1}$ and $g_i = f_i \circ h_i^{-1} $. Thus $ g _ i $ is a $k_i$-fold such that $g _ i \circ \beta_i = g _ i.$ 
By Theorem \ref{teo: equivdiagrams1},   $(p_i)_s$ is equivalent to $(g_i)_s$. \end{proof}

        \quad

The normal form obtained in the previous result depends explicitly on the nontrivial eigenvalues of the reflections of $\Lambda_s$. However, replacing any $\alpha_i$ by a $j$-th iterate $\alpha_i^{j}$ of order $k_i$, for $j\in \{1,\ldots,k_i-1\}$, we obtain the same normal $g_i$. Indeed, this case yields another normal form $(\beta_1,\ldots, \beta_i^j, \ldots, \beta_s)$, where  
\[ \beta_i^j(x_1,\ldots,x_n)
=
\bigl(
x_1+\frac{a_{i1}(\lambda_i^{j}-1)}{\lambda_i-1}\,x_i,\;
\ldots,\;
\lambda_i^{j}x_i,\;
\ldots,\;
x_n+\frac{a_{in}(\lambda_i^{j}-1)}{\lambda_i-1}\,x_i \bigr). \]
Now, the normal form of their corresponding  divergent diagrams, say $(\tilde{g_i})_s$, comes from $\tilde{g_i} = f_i \circ \tilde{h}_i^{-1}$, where  
from a direct computation we find that $\tilde{h}_i = h_i$. Therefore, 
\[ \tilde{g_i} = f_i \circ \tilde{h}_i^{-1} =  f_i \circ h_i^{-1} = g_i.\]
Hence, we have just proved:

\begin{theorem} \label{thm: independence of generator}
The algebraic expression of the normal form~\eqref{eq: normalform_g} 
obtained in Proposition~\ref{prop:equivcomreflexoes} 
does not depend on the choice of generator of the cyclic group 
$\langle \alpha_i \rangle$, for any $i\in \{1,\ldots,s\}$.
\end{theorem}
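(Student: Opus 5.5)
The plan is to make precise the computation sketched just before the statement: the conjugating linear map $h_i$ built from $\beta_i$ does not change when $\beta_i$ is replaced by another generator $\beta_i^j$ of $\langle\beta_i\rangle$. Fix $i$ and $j\in\{1,\ldots,k_i-1\}$ with $\gcd(j,k_i)=1$, so that $\alpha_i^j$ is again a generator of order $k_i$ and its nontrivial eigenvalue is $\mu=\lambda_i^j\neq 1$. Conjugating by the same linear isomorphism $h$ used in Proposition~\ref{prop:equivalenciadereflexões} sends $\alpha_i^j$ to $\beta_i^j$. Hence the pre-normal form of the tuple with $\alpha_i$ replaced by $\alpha_i^j$ is obtained by replacing $\beta_i$ by $\beta_i^j$; the other entries are unchanged.

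First I compute $\beta_i^j$. Since $\beta_i$ fixes $e_l$ for $l\neq i$ and $\beta_i(e_i)=\lambda_i e_i+\sum_{l\neq i}a_{il}e_l$, an induction on $j$ gives $\beta_i^j(e_i)=\lambda_i^j e_i+\sum_{l\neq i}a_{il}(1+\lambda_i+\cdots+\lambda_i^{j-1})e_l$. This yields coefficients $a'_{il}=a_{il}(\lambda_i^j-1)/(\lambda_i-1)$, which is the displayed formula. So $\beta_i^j$ again has the shape \eqref{eqpsi}, with eigenvalue $\mu$ and coefficients $a'_{il}$.

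Next I apply the construction in the proof of Proposition~\ref{prop:equivcomreflexoes} to $\beta_i^j$. It produces $\tilde h_i(x)=\bigl(x_1-\tfrac{a'_{i1}}{1-\mu}x_i,\ldots,x_i,\ldots,x_n-\tfrac{a'_{in}}{1-\mu}x_i\bigr)$, together with $\tilde g_i=f_i\circ\tilde h_i^{-1}$. The key identity is
\[
\frac{a'_{il}}{1-\lambda_i^j}=\frac{a_{il}(\lambda_i^j-1)}{(\lambda_i-1)(1-\lambda_i^j)}=\frac{a_{il}}{1-\lambda_i},
\]
so $\tilde h_i=h_i$. Therefore $\tilde g_i=g_i$, which is exactly the expression \eqref{eq: normalform_g}.

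I would also verify, as the paper does, that $\tilde h_i\circ\varphi_i^j\circ\tilde h_i^{-1}=\beta_i^j$, so that $g_i$ is indeed associated with $\beta_i^j$. Since $\varphi_i^j$ generates $\langle\varphi_i\rangle$ and $f_i\circ\varphi_i^j=f_i$, Theorem~\ref{teo: equivdiagrams1} gives the same equivalence conclusion. The only delicate point, and the main obstacle, is the case $\lambda_i^j=1$, where the division fails. This is excluded because the new generator has order $k_i$, so $\gcd(j,k_i)=1$ and $\lambda_i^j\neq 1$. I would state this restriction explicitly. Doing this for each $i$ independently proves the theorem.
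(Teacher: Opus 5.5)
Your proposal is correct and follows essentially the same route as the paper. You compute $\beta_i^j$ with coefficients $a_{il}(\lambda_i^j-1)/(\lambda_i-1)$, observe that $a'_{il}/(1-\lambda_i^j)=a_{il}/(1-\lambda_i)$ so that $\tilde h_i=h_i$, and conclude $\tilde g_i=f_i\circ h_i^{-1}=g_i$. Compared with the paper, you add two details: the induction behind the formula for $\beta_i^j$, which the paper merely displays, and the explicit requirement $\gcd(j,k_i)=1$ (equivalently $\lambda_i^j\neq 1$), which the paper encodes in the phrase ``of order $k_i$''.
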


We present in Theorem \ref{teo:class_Abelian} the classification of divergent diagrams of $\underline{k}$-folds associated with $s$-tuples of commuting reflections. This relies on Theorem~\ref{teo: equivdiagrams1} and on the following important property of simultaneous conjugacy: 

\begin{lemma} \label{prop:transversal} 
Transversality and being Abelian are invariants under equivalence.
\end{lemma}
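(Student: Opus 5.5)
Suppose $(\alpha_i)_s$ and $(\beta_i)_s$ are equivalent, so there is $h \in \mathcal{R}$ with $\beta_i = h \circ \alpha_i \circ h^{-1}$ for $i = 1, \ldots, s$. We treat the two properties separately. Since equivalence is symmetric (use $h^{-1}$), it suffices to show that each property passes from $(\alpha_i)_s$ to $(\beta_i)_s$.

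\emph{Being Abelian.} This is purely group theoretic. If $\alpha_i \circ \alpha_j = \alpha_j \circ \alpha_i$ for all $i,j$, then
\[
\beta_i \circ \beta_j = h \circ \alpha_i \circ \alpha_j \circ h^{-1} = h \circ \alpha_j \circ \alpha_i \circ h^{-1} = \beta_j \circ \beta_i .
\]
More conceptually, conjugation by $h$ is a group isomorphism from the group generated by the $\alpha_i$ onto the group generated by the $\beta_i$, sending $\alpha_i$ to $\beta_i$. Hence commutativity is preserved.

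\emph{Transversality.} First we check the fixed-point sets. If $\alpha_i(x) = x$, then $\beta_i(h(x)) = h(\alpha_i(x)) = h(x)$, and conversely. So
\[
\mathrm{Fix}(\beta_i) = h(\mathrm{Fix}(\alpha_i)),
\]
and since $h$ is a biholomorphism fixing $0$, each $\mathrm{Fix}(\beta_i)$ is again a codimension-one submanifold through $0$.

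Next we pass to tangent spaces. Differentiating at $0$ and using $h(0)=0$ gives
\[
T_0\mathrm{Fix}(\beta_i) = (dh)_0\bigl(T_0\mathrm{Fix}(\alpha_i)\bigr).
\]
Because $(dh)_0$ is a linear isomorphism, it commutes with intersections:
\[
\bigcap_{i=1}^s T_0\mathrm{Fix}(\beta_i) = (dh)_0\Bigl(\bigcap_{i=1}^s T_0\mathrm{Fix}(\alpha_i)\Bigr).
\]
A linear isomorphism preserves dimension, hence codimension. So the codimension of this intersection stays equal to $s$.

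Pairwise transversality at $0$ follows in the same way. For $i \neq j$ it means $T_0\mathrm{Fix}(\alpha_i) + T_0\mathrm{Fix}(\alpha_j) = \mathbb{C}^n$, and applying $(dh)_0$ to this sum gives the corresponding equality for the $\beta$'s.

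The only point requiring care is the identity $T_0(h(M)) = (dh)_0(T_0M)$ for a submanifold $M$ through $0$. This is standard, since $h$ restricts to a biholomorphism of $M$ onto $h(M)$. The same argument holds verbatim for nonlinear reflections. In the linear setting of this subsection, $h$ may even be taken linear, and the identity is immediate.
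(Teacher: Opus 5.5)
Your proof is correct. The paper states this lemma without proof, treating it as an immediate property of simultaneous conjugacy, and your argument is the routine verification it leaves implicit: conjugation by $h$ is a group isomorphism, $\mathrm{Fix}(\beta_i)=h(\mathrm{Fix}(\alpha_i))$ (the same identity the paper uses when deriving the pre-normal form \eqref{eqpsi}), and $(dh)_0$ maps tangent spaces, their sums and their intersections isomorphically.
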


\begin{theorem}\label{teo:class_Abelian}
	Let $\{\alpha_ 1, \ldots, \alpha_ s \}$ be a transversal set of linear reflections of orders $k_1, \ldots, k_s \geq 2$, respectively. Let $\lambda_i$ be the eigenvalue of $\alpha_i$, with $\lambda_i \neq 1$. If $\Lambda =  \langle \alpha_ 1, \ldots, \alpha_ s \rangle$ is an Abelian group, then the $s$-tuple $(\alpha_i)_s $ is equivalent to $ (\varphi_i)_s$, with $\varphi_i$ given in (\ref{eq:Abelian}), and a divergent diagram of $\underline{k}$-folds $(p_i)_s$ associated with $(\alpha_i)_s$ is equivalent to $(f_i)_s$, for $f_i$ given in (\ref{eq:classicfold}).
    \end{theorem}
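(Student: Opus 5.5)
First I would reduce to the pre-normal form. By Proposition~\ref{prop:equivalenciadereflexões}, $(\alpha_i)_s$ is equivalent to $(\beta_i)_s$ with $\beta_i$ as in (\ref{eqpsi}), and Lemma~\ref{prop:transversal} shows that the group $\langle \beta_1,\ldots,\beta_s\rangle$ is again Abelian and transversal. So it suffices to show that an Abelian $s$-tuple in the form (\ref{eqpsi}) is equivalent to $(\varphi_i)_s$. The statement about diagrams then follows directly from Theorem~\ref{teo: equivdiagrams1}. Indeed, $(f_i)_s$ is a divergent diagram of $\underline{k}$-folds associated with $(\varphi_i)_s$, where $\varphi_i$ has order $k_i$, and $\alpha_i$ has order $k_i$ by hypothesis.

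Next I would extract the constraints that commutativity imposes on the coefficients. Write $[\beta_i] = I + u_i e_i^T$, where $u_i = \beta_i(e_i) - e_i$ has $i$-th entry $\lambda_i - 1$ and $j$-th entry $a_{ij}$ for $j\neq i$. For $i \neq j$ with $i,j \le s$, we have
\[
[\beta_i][\beta_j] = I + u_i e_i^T + u_j e_j^T + (e_i^T u_j)\, u_i e_j^T ,
\]
and the symmetric expression holds for $[\beta_j][\beta_i]$. Equality of the two forces $(e_i^T u_j)\,u_i e_j^T = (e_j^T u_i)\,u_j e_i^T$. The left side is supported in column $j$ and the right side in column $i$, so both vanish. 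Since $u_i \neq 0$, this gives $e_i^T u_j = a_{ji} = 0$ for all $i\neq j$ in $\{1,\ldots,s\}$. Thus the only possibly nonzero off-diagonal entries are the $a_{ij}$ with $j \ge s+1$.

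Then I would remove these remaining entries by a single simultaneous linear conjugacy. I would take $h(x) = x - \sum_{i=1}^s \frac{1}{1-\lambda_i}\, w_i x_i$, where $w_i$ is the vector with entries $a_{ij}$ for $j\ge s+1$ and zeros elsewhere; this is the combined form of the maps $h_i$ in the proof of Proposition~\ref{prop:equivcomreflexoes}. Equivalently, the $i$-th eigenvector $v_i = e_i + \frac{1}{\lambda_i - 1} w_i$ of $\beta_i$ (eigenvalue $\lambda_i$) has no components in the first $s$ coordinates other than the $i$-th, because $a_{ij}=0$ for $j\le s$. The vectors $v_1,\ldots,v_s,e_{s+1},\ldots,e_n$ therefore form a basis. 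In this basis every $\beta_i$ fixes $e_{s+1},\ldots,e_n$ and $v_j$ for $j\neq i$, since $v_j \in {\rm Fix}(\beta_i)=\{x_i=0\}$, and multiplies $v_i$ by $\lambda_i$. The change-of-basis map $H$ sending $e_i\mapsto v_i$ and fixing $e_j$ for $j>s$ then satisfies $H^{-1}\beta_i H = \varphi_i$ for all $i$ simultaneously.

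The main obstacle is the commutativity computation, specifically making sure that commutation kills exactly the coefficients $a_{ij}$ with $i,j\le s$. This is what makes the eigenvectors $v_i$ compatible with all the other fixed hyperplanes at the same time. Once that is established, the basis argument is routine.
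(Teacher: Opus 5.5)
Your proof is correct, and it shares the paper's outer structure: reduce to the pre-normal form (\ref{eqpsi}), conjugate that tuple to $(\varphi_i)_s$, then apply Theorem~\ref{teo: equivdiagrams1}. The difference is in how the middle step is done.

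The paper cites the general fact that commuting diagonalizable operators are simultaneously diagonalizable. It then uses $\operatorname{codim}\operatorname{Fix}(\beta_i)=1$ and transversality to argue that, in a common eigenbasis, each $\beta_i$ carries its single nontrivial eigenvalue on a different basis vector. A reordering of that basis then gives $\varphi_i$.

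You instead work with the rank-one form $[\beta_i]=I+u_ie_i^T$ and compute directly that commutativity forces $a_{ij}=0$ for all $i\neq j\le s$. This extends the paper's remark for $s=2$ that $\beta_{1_a},\beta_{2_a}$ commute iff $a_{12}=a_{21}=0$. You then write down the common eigenbasis $v_1,\dots,v_s,e_{s+1},\dots,e_n$ and the conjugacy $H$ explicitly. Your $H$ is indeed the composite $h_1\circ\cdots\circ h_s$ of the maps from Proposition~\ref{prop:equivcomreflexoes}, since these maps commute once the $a_{ij}$ with $i,j\le s$ vanish.

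What your route buys is an elementary, fully explicit normalizing map and a precise description of which parameters commutativity kills. In your argument transversality is already absorbed into the pre-normal form, so the linear independence of the $v_i$ is automatic. The paper's route is shorter and coordinate-free, with transversality doing the work of separating the eigen-directions inside an abstract common eigenbasis.
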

\begin{proof} By Proposition \ref{prop:equivalenciadereflexões}, the $s$-tuple $(\alpha_i)_s$ is equivalent to $(\beta_i)_s$, with $\beta_i$ given in (\ref{eqpsi}). Since the reflections \(\beta_1,\ldots,\beta_s\) are diagonalizable and generate an Abelian group, they are simultaneously diagonalizable with respect to a basis \(\mathcal B\) of \(\mathbb C^n\). Using that \(\operatorname{codim}\operatorname{Fix}(\beta_i)=1\) and the transversality of \(\{\beta_1,\ldots,\beta_s\}\), 
it follows that, up to a possible reordering of ${\mathcal B}$,  \(\beta_i\) has the normal form \(\varphi_i\)  given in \eqref{eq:Abelian}. Hence \((\alpha_i)_s\) is equivalent to \((\varphi_i)_s\). The equivalence between \((p_i)_s\) and \((f_i)_s\) follows directly from Theorem~\ref{teo: equivdiagrams1}.
\end{proof}

\vspace{.3cm}

\subsection{Normal forms of divergent diagrams associated with pairs of reflections}\label{sections2nmaior}

This subsection is devoted to the case $s=2$. We provide normal forms for transversal pairs of linear reflections (Theorem \ref{prop: normal-form-pair-reflection}) and for the associated divergent diagrams (Theorem \ref{teoclassdobrassnm2}) on $(\mathbb{C}^n,0)$, for $n \geq 2$, extending to the complex setting the results established in Sections~6 and~7 of \cite{MMT} for the real case. Our approach relies on Theorem~\ref{teo: equivdiagrams1} and Theorem~\ref{thm: matrix}. \

Consider the transversal reflections $ \beta_{1_a} $ and $ \beta_{2_a} $ of Theorem~\ref{thm: matrix}, 
	\begin{equation}\label{eq:finalpsi1}
    \begin{array}{l}
			\beta_{1_a} (x) = (\lambda_1 x_1, x_2 + a_{12} x_1 , x_3 + a_{13} x_1 ,\ldots, x_n + a_{1n}x_1 ), \\
			\beta_{2_a} (x) = (x_1 + a_{21}x_2 , \lambda_2 x_2 , x_3 + a_{23} x_2, \ldots, x_n + a_{2n} x_2),
    \end{array}
	\end{equation} 
    where $x = (x_1, \ldots, x_n)$, $\lambda_i$ is a primitive $k_i$-th root of unity, $ a_{ij}\in \mathbb{C} $, for $ i=1,2 $, and $ 1 \leq j \leq n $, $ j \neq i $. We have that         
\begin{equation} \label{propaitrace}
{\rm tr}({\beta_{1_a} \circ \beta_{2_a}}) = a_{12} a_{21} + \lambda_1 + \lambda_2 + n-2.
\end{equation} 
We also have that $\beta_{1_a}$ and $\beta_{2_a}$ commute if only if $ a_{12}=a_{21}=0 $.\\

As a direct consequence from the above, when $n=2$ we have an equivalence relation on the space of para\-meters  $(a_{12}, a_{21}) \in \mathbb{C}^2 $ inherited from the equivalence of the corresponding reflections: $(a_{12}, a_{21})$ is equivalent to $(b_{12}, b_{21}) $ if and only if $ (\beta_{1_a}, \beta_{2_a}) $  and $ (\beta_{1_b}, \beta_{2_b}) $ are equivalent. In this case, $ \textrm{tr}(\beta_{1_a}\circ \beta_{2_a}) = a_ {12} a_ {21} + \lambda_1 + \lambda_2$, so by Theorem~\ref{thm: matrix}  we have the following representatives 
of $(a_{12}, a_{21}) $: 

\begin{equation*}
		\begin{cases}
			(0,0), \ \  \text{if the reflections commute}; \\
			(1,0), \ \   \text{if} \ a_{12}\neq 0 = a_{21};\\
			({\rm tr}(\beta_{1_a}\circ \beta_{2_a}) - \lambda_1 - \lambda_2 , 1), \ \ \text{if} \ a_{21}\neq 0.\\
		\end{cases}
	\end{equation*}

    \vspace{.3cm}

The next result is based on the stratification above and generalizes \cite[Proposition~7.1]{MMT}. In this way, the condition \(a_{12}a_{21}=4\), which was previously unexplained in the real setting, becomes clear, since it arises naturally from the identity \(a_{12}a_{21}= (\lambda_1-1)(\lambda_2-1)$, for $\lambda_1 = \lambda_2 = -1$.

 \vspace{.2cm}
 
\begin{theorem}\label{prop: normal-form-pair-reflection} For $ n \geq 2$, consider the pair $(\beta_{1_a}, \beta_{2_a})$ of transversal reflections on $(\mathbb{C}^n,0)$ of orders $k_1, k_2 \geq 2$ and eigenvalues $\lambda_1, \lambda_2 \neq 1$, respectively. If $\Lambda = \langle \beta_{1_a}, \beta_{2_a} \rangle$ is Abelian, then  $ (\beta _{1_a}, \beta_{2_a}) $ is equivalent to $ (\varphi_1, \varphi_{2}) $, where 
$$ \begin{array}{l}
\varphi_{1}(x) \ = (\lambda_1 x _ 1,x _ 2 ,\ldots , x _ n), \\ \varphi_{2}(x)  = (x_1, \lambda_2 x _ 2, x _ 3,\ldots , x _ n).
\end{array}$$ 
If $\Lambda$ is non-Abelian, then we have the following cases:
\begin{enumerate}
		\item[$(i)$] If $a_{12} a_{21} \neq (\lambda_1 - 1)(\lambda_2 - 1)$, then $(\beta_{1_a}, \beta_{2_a})$ is equivalent to $(\gamma_1, \gamma_2)$, where  \begin{equation}\label{eqsnm1}
        \begin{array}{l}
		\gamma_1(x) = (\lambda_1 x_1, x_1 + x_2, x_3 , \ldots , x_n), \\
        \gamma_2 (x)=(x_1, \lambda_2 x_2, x_3 , \ldots , x_n)
	\end{array}
    \end{equation} whenever $a_{21} = 0$, and 
\begin{equation}\label{eqsnm3}
\begin{array}{l}
\gamma_1(x) = \bigl(\lambda_1 x_1,  a_{12}a_{21}x_1 + x_2, x_3 , \ldots, x_n \bigr), \\ 
\gamma_2(x) =  (x_1 + x_2 , \lambda_2 x_2, x_ 3 , \ldots , x_n)
\end{array}
 \end{equation} whenever $a_{21}\neq 0$.

 \item[$(ii)$] If $n\geq 3$ and $a_{12} a_{21} = (\lambda_1 - 1)(\lambda_2 - 1)$, then $(\beta_{1_a}, \beta_{2_a})$ is equivalent to $(\gamma_1, \gamma_2)$, where
	\begin{equation}\label{eqsnm5}
    \begin{array}{ll}
		\gamma_1(x) = (\lambda_1 x_1, (\lambda_1 - 1)(\lambda_2-1)x_1 + x_2 , x_3 , \ldots, x_n ), \\ 
			\gamma_2(x) =  (x_1 + x_2 , \lambda_2 x_2, x_ 3 , \ldots , x_n)
             \end{array}
	\end{equation} whenever $(a_{23}, \ldots, a_{2n}) = \frac{a_{21}}{\lambda_1 - 1}(a_{13} , \ldots, a_{1n})$, and 
    \begin{equation} \label{eqsnm7}
    \begin{array}{l}
		\gamma_1(x) = (\lambda_1 x_1,(\lambda_1 - 1)(\lambda_2-1)x_1 + x_2 , x_3 , \ldots, x_n),\\ 
	   \gamma_2(x) =  (x_1 + x_2 , \lambda_2 x_2, x_2+ x_ 3 ,x_4, \ldots , x_n),
           \end{array}
	\end{equation} whenever $(a_{23}, \ldots, a_{2n}) \neq \frac{a_{21}}{\lambda_1 - 1}(a_{13} , \ldots, a_{1n})$.
    \end{enumerate}

\end{theorem}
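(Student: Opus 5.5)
The Abelian case is Theorem~\ref{teo:class_Abelian} with $s=2$, so from now on I assume $\Lambda$ is non-Abelian, that is, $(a_{12},a_{21})\neq(0,0)$. My plan is to use Theorem~\ref{thm: matrix} with $s=2$. There we are free to choose $\xi_2$ and the entries $c_{j1},c_{j2},c_{jk}$ for $j,k\ge 3$. I would take the lower-right block $(c_{jk})_{j,k\geq 3}$ to be the identity, and use $c_{j1},c_{j2}$ to kill, or normalise, the coefficients $a_{1j},a_{2j}$ for $j\geq 3$. I would take $\xi_2$ to normalise the pair $(a_{12},a_{21})$; only the product $a_{12}a_{21}$ survives, since it is the trace invariant in (\ref{propaitrace}).

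\textbf{Elimination step.} With that choice of $C$, Theorem~\ref{thm: matrix} gives, for each $j\geq 3$,
\[
b_{1j}=(\lambda_1-1)c_{j1}+a_{12}c_{j2}+a_{1j}, \qquad
\xi_2 b_{2j}=a_{21}c_{j1}+(\lambda_2-1)c_{j2}+a_{2j}.
\]
To get $b_{1j}=b_{2j}=0$ I must solve a $2\times 2$ linear system in $(c_{j1},c_{j2})$. Its matrix is
\[
M=\begin{pmatrix}\lambda_1-1 & a_{12}\\ a_{21} & \lambda_2-1\end{pmatrix},
\qquad \det M=(\lambda_1-1)(\lambda_2-1)-a_{12}a_{21}.
\]
This is exactly where the dichotomy between $(i)$ and $(ii)$ comes from. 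Geometrically, the plane $V$ spanned by the images of $\beta_{i_a}-I$ is invariant under both reflections. The condition $\det M\neq 0$ says that $V$ is complementary to $W={\rm Fix}(\beta_{1_a})\cap{\rm Fix}(\beta_{2_a})$, so the problem splits as a $2$-dimensional problem on $V$ plus the identity on $W$.

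\textbf{Case $(i)$: $\det M\neq 0$.} Solving the system kills every $a_{1j}$ and $a_{2j}$ with $j\ge 3$. It then remains to treat the pair $(a_{12},a_{21})$, which transforms by $b_{12}=\xi_2a_{12}$ and $b_{21}=a_{21}/\xi_2$.
\begin{itemize}
\item If $a_{21}=0$, then $a_{12}\neq0$, and choosing $\xi_2=1/a_{12}$ gives (\ref{eqsnm1}).
\item If $a_{21}\neq0$, choosing $\xi_2=a_{21}$ gives $b_{21}=1$ and $b_{12}=a_{12}a_{21}$, which is (\ref{eqsnm3}).
\end{itemize}

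\textbf{Case $(ii)$: $\det M=0$ and $n\geq3$.} Now $a_{12}a_{21}=(\lambda_1-1)(\lambda_2-1)\neq 0$, so $a_{21}\neq 0$, and the same $\xi_2=a_{21}$ normalises the $2\times2$ block as in (\ref{eqsnm5}). Since $M$ is singular, the pair $(a_{1j},a_{2j})$ can be killed exactly when it lies in the column space of $M$. Because $\lambda_1-1\neq 0$, the rows of $M$ satisfy $(\text{row }2)=\frac{a_{21}}{\lambda_1-1}(\text{row }1)$. Hence the column space is $\{(u,w): w=\frac{a_{21}}{\lambda_1-1}u\}$, and the obstruction is the vector
\[
\bigl(a_{2j}-\tfrac{a_{21}}{\lambda_1-1}a_{1j}\bigr)_{j\geq3}.
\]
If this vector vanishes, every pair can be killed and we obtain (\ref{eqsnm5}). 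If it does not vanish, I would first kill all the $a_{1j}$ using $c_{j1}$. I would then change the lower-right block $(c_{jk})$ so that the nonzero residual vector $(b_{2j})_{j\ge3}$ is sent to $e_3$. This block acts linearly on $(a_{2j})$, and it does not disturb the coordinates $1,2$. The result is (\ref{eqsnm7}). The two forms cannot be equivalent, because vanishing of the obstruction means that $V\cap W=0$ inside the sum of the images of $\beta_{i_a}-I$, and this is conjugation-invariant.

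\textbf{Main obstacle.} I expect the delicate point to be case $(ii)$. There I must check that, when $b_{2j}$ is normalised to $e_3$ via the lower block, the general formula of Theorem~\ref{thm: matrix} with a non-identity lower block does not bring back nonzero $b_{1j}$ terms. I also need to confirm that the obstruction vector transforms covariantly, up to the scalar $1/\xi_2$ and the linear action of the lower block, so that its vanishing is a genuine invariant separating (\ref{eqsnm5}) from (\ref{eqsnm7}).
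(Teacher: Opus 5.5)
Your proposal is correct and is essentially the paper's proof. The paper's affine map $A_{\xi\eta}=T_{v_{\xi\eta}}\circ L_{\xi}$ is your matrix $M$ acting on each pair $(c_{j1},c_{j2})$ (with the factor $\xi^{-1}$ on the second component), followed by translation by the $\eta$-transformed coefficients; the dichotomy $\det M\neq 0$, the obstruction $(a_{23},\ldots,a_{2n})=\frac{a_{21}}{\lambda_1-1}(a_{13},\ldots,a_{1n})$, and the choices $\xi=a_{12}^{-1}$ or $\xi=a_{21}$ all coincide with the paper's.

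The obstacle you flag is harmless. In Theorem~\ref{thm: matrix} the sum $\sum_{k\geq 3}a_{ik}c_{jk}$ is just the $j$-th entry of $\eta=(c_{jk})_{j,k\geq 3}$ applied to $(a_{i3},\ldots,a_{in})$, so first fix an invertible $\eta$ sending your nonzero residual vector to $a_{21}e_3$, then take $c_{j2}=0$ and choose $c_{j1}$ to kill $b_{1j}$. This yields (\ref{eqsnm7}) directly, and it is more accurate than the paper's assertion that the target vector lies in ${\rm Im}(A_{\xi\eta})$ for \emph{all} $\eta$, which only holds for a suitable $\eta$.
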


\begin{proof}  The Abelian case is covered by Theorem~\ref{teo:class_Abelian}.
For the non-Abelian case, we have that $(\beta_{1_a}, \beta_{2_a})$ is equivalent to $(\beta_{1_b}, \beta_{2_b})$ if and only if there exists $C = (c_{jk})$ as in (\ref{Matrix-H}) and $\xi \neq 0$ such that $(b_{12}, b_{21})  = (\xi a_ {12}, \xi^{-1} a _ {21})$ and 	    
{\small \begin{equation*}
		b_{1j} = (\lambda_1 - 1) c_{j1}  + a_{12} c_{j2} + \sum_{k=3}^n a_{1k} c_{jk}, \quad b _ {2j} =\xi^{-1} \bigl((\lambda_2 - 1)  c_{j2} + a_{21} c_{j1}  + \sum _ {k=3}^n a_{2k} c_{jk}\bigr), \end{equation*}} \noindent with $3 \leq j \leq n $.    
        For the linear isomorphism induced by $\eta=(c_{jk})$, with $3 \leq j,k\leq n$, define the vector \[
		v_{\xi \eta} = ( \eta (a _ {13}, \ldots ,a _ {1n}), \xi^{-1} \eta(a _ {23}, \ldots ,a _ {2n})) \in  \mathbb{C}^{2n-4}
	\] and the auxiliary map $A_ {\xi \eta }  =  T_{v_{\xi \eta}} \circ L_{\xi}: \mathbb{C}^{2n-4}\rightarrow \mathbb{C}^{2n-4}$, where $T_{v_{\xi \eta}}$ is the translation by  $v_{\xi \eta}$ and \[
	L_{\xi}(z,w) = \bigl((\lambda_1 - 1) z + a _ {12}w, \xi^{-1} (a _ {21} z + (\lambda_2 - 1) w) \bigr),
	\] for all $(z,w) \in \mathbb{C}^{n-2} \times \mathbb{C}^{n-2}$. Notice that $L_{\xi}$ defines a linear isomorphism if and only if $a_{12} a_{21} \neq (\lambda_1-1)(\lambda_2-1)$. A direct computation shows that  $(\beta_{1_a} , \beta_{2_a})$ and $(\beta_{1_{b}}, \beta_{2_{b}})$ are equivalent if and only if
	\begin{equation}\label{eqrefc}
		(b _{12}, b _ {21}) = (\xi a_{12}, \xi^{-1}a_{21})\quad \text{and} \quad (b _ {13}, \ldots, b _ {1n} , b _ {23}, \ldots, b _ {2n}) \in {\rm Im} (A _ {\xi \eta}) ,
	\end{equation} for some $\xi \neq 0$.      
	 \begin{enumerate}
	 	\item[$ (i) $] If $a_ {12} a _ {21} \neq (\lambda_1-1)(\lambda_2-1)$, then ${\rm Im} (A_ {\xi \eta}) = \mathbb{C}^{2n-4}$ and we set $b_{jk}=0$ for all $j \in \{1,2\}$ and $k\in \{3,\ldots, n\}$.

If $a_{21} = 0$, then $a_{12}\neq 0$ (non-Abelian assumption) and setting $\xi = a_{12}^{-1}$ in (\ref{eqrefc}) we obtain $(b_{12}, b_{21}) = (1,0)$.  If $a_ {21} \neq 0$, we take $ \xi = a _ {21}$  so that
	$(b_{12}, b_{21})=(a_{12}a_{21},1)$. In this case, $\beta_{1_b} = \gamma_1$ and $\beta_{2_b} = \gamma_2$ as in (\ref{eqsnm1}) and (\ref{eqsnm3}), respectively.
      	
	\item[$ (ii) $] For $n \geq 3$, if $a _ {12} a _ {21} = (\lambda_1-1)(\lambda_2-1)$,  then  $v_ {\xi \eta}\in {\rm Im} (L_{\xi} )$ if and only if    \begin{equation}\label{eq:twotrianglesfinal}
		 (a _ {23} , \ldots , a _ {2n}) = \frac{a _ {21}}{\lambda_1 - 1} (a _ {13} ,  \ldots , a _ {1n}).
	\end{equation}
    In this case, $(0, \ldots, 0) \in {\rm Im}(A _ {\xi \eta}) = {\rm Im} (L_{\xi})$ and we can take $\xi = a _ {21}$ in (\ref{eqrefc})   so that $(b_{ 12 } , b _ {21}) = ((\lambda_1 -1)(\lambda_2 - 1) , 1 ),$  obtaining $\gamma_1$ and $\gamma_2$ as in (\ref{eqsnm5}).

    If $(a _ {23}, \ldots, a _ {2n}) \neq \frac{a_{21}}{\lambda_1 - 1} (a _ {13} , \ldots , a _ {1n} )$, then 
    $(0, 0, \ldots, 0,1, 0, \ldots, 0) \in {\rm Im} (A_{\xi \eta})$ for all $\eta$.  Choosing $\xi = a _ {21}$, $b_{23}=1$ and $b_{1k}=b_{2,k+1} = 0$ for all $k \in \{3,\ldots, n\}$, we obtain $\gamma_1$ and $\gamma_2$ as in (\ref{eqsnm7}).
	\end{enumerate} \end{proof}

\begin{remark} \label{Rem-Eing-Fix}
By  (\ref{propaitrace}), the equality $a_{12} a _ {21} = (\lambda_1 - 1)(\lambda_2-1)$ is equivalent to ${\rm tr}({\beta_{1_a} \circ \beta_{2_a}}) =  \lambda_1\lambda_2 + (n - 1)$. In this case, for $\beta_{1_a}$ and $\beta_{2_a}$ defined in \eqref{eq:finalpsi1}, the condition (\ref{eq:twotrianglesfinal}) can be replaced by $E_{\lambda_1}^{\beta_{1_a}} = E_{\lambda_2}^{\beta_{2_a}}$, where $E_{\lambda_i}^{\beta_{i_a}}$ denotes the eigenspace corres\-ponding to the eigenvalue $\lambda_i$ of $\beta_{i_a}$, $i=1,2$. Moreover, $a _ {21} = 0$ if and only if $E_{\lambda_2}^{\beta_{2_a}} \subseteq {\rm Fix}({\beta_{1_a}})$. These three conditions do not depend on the choice of class representative, as they are invariant by simultaneous conjugacy.  \end{remark}

We now use Theorem~\ref{prop: normal-form-pair-reflection}, rewritten in terms of Remark \ref{Rem-Eing-Fix}:

\begin{theorem}\label{teoclassdobrassnm2} For $n \geq 2,$ let $(p_1, p_2) : (\mathbb{C}^n,0) \rightarrow (\mathbb{C}^{2n}, 0) $ be a divergent diagram of $\underline{k}$-folds associated with the pair $(\alpha_1, \alpha_2)$ of transversal linear reflections on $(\mathbb{C}^n,0)$ of orders $k_1, k_2 \geq 2$ and eigenvalues $\lambda_1, \lambda_2 \neq 1$, respectively. Let  $\Lambda =  \langle \alpha_1 , \alpha_2 \rangle$.\begin{enumerate}
		\item[$(i)$] If $\Lambda$ is Abelian, then $(p_1, p_2)$ is equivalent to $(f_1, f_2)$, for $f_i$ as in (\ref{eq:classicfold}).  
               
		\item[$(ii)$] Suppose that $ \Lambda $ is non-Abelian and $ {\rm tr}({\alpha _ 1 \circ\alpha _ 2}) \neq \lambda_1\lambda_2 + (n - 1)$. If $ E_{\lambda_2}^{\alpha_2} \subseteq {\rm Fix}({\alpha _ 1})$,  then $(p_1, p_2)$ is equivalent to $ (g _1, g _2)$, where
		\begin{equation*}
		    \label{eq:g11} 
            \begin{array}{l}
            g _1 (x) =  \bigl(x_1^{k_1},   \frac{1}{1-\lambda_1}x_1 + x_2, x_3 , \ldots, x_ n\bigr), \vspace{0.1cm} \\
            g _2(x) = (x_1, x_2^{k_2}, x _ 3, \ldots , x_ n). 
		\end{array}
        \end{equation*}
		If $E_{\lambda_2}^{\alpha_{2}} \not\subseteq {\rm Fix}({\alpha_1})$,  then $(p_1, p_2)$ is equivalent to $ (g _1, g _2)$, where
		\begin{equation*} \begin{array}{l}
		    \label{eq:g13}
		g _1 (x) =  \bigl(x_1^{k_1},   \frac{{\rm tr}(\alpha_1 \circ \alpha_2) -  \lambda_1 - \lambda_2 + (2-n)}{1-\lambda_1}x_1 + x_2, x_3 , \ldots, x_ n\bigr), \vspace{0.2cm}\\ 
		     g _2(x) = \bigl(x_1 + \dfrac{1}{1-\lambda_2}x_2, x_2^{k_2}, x _ 3, \ldots , x_ n\bigr).
		\end{array}
		    		\end{equation*}
		\item[$ (iii) $] For $n\geq 3$, suppose that $ \Lambda $ is non-Abelian and $ {\rm tr}({\alpha_ 1 \circ \alpha_ 2}) =  \lambda_1\lambda_2 + (n - 1)$. If $E_{\lambda_1}^{\alpha_1} = E_{\lambda_2}^{\alpha_2}$, then $(p_1, p_2)$ is equivalent to $ (g_1, g_2)$, where \begin{equation*}
		    \label{eq:g15}
            \begin{array}{l}
            g _ 1 (x) = \bigl(x _ 1 ^{k_1}, (1-\lambda_2) x _ 1 + x _ 2, x _ 3 , \ldots, x _ n \bigr), \\ 
            g _ 2 (x) =  \bigl(x _ 1 + \frac{1}{1-\lambda_2} x _ 2 , x _ 2^{k_2}, x_ 3 , \ldots , x _ n \bigr).    
            \end{array}
 			\end{equation*} If $E_{\lambda_1}^{\alpha_1} \neq  E_{\lambda_2}^{\alpha_2}$, then $(p_1, p_2)$ is equivalent to $ (g_1, g_2)$, where
 		 \begin{equation*}
		    \label{eq:g17}
            \begin{array}{l}
            g _ 1 (x) = (x _ 1^{k_1},  (1-\lambda_2) x _ 1 + x_ 2, x _ 3 , \ldots, x _ n ), \vspace{0.1cm}\\ 
 		 	g _ 2 (x) =  \bigl(x _ 1 +\frac{1}{1-\lambda_2}  x _ 2 , x _ 2^{k_2},   \frac{1}{1-\lambda_2}x _ 2 + x_ 3 ,x _ 4, \ldots , x _ n\bigr).    
            \end{array}
 		 	 		 \end{equation*}
 	\end{enumerate} 
 \end{theorem}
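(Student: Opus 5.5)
The plan is to reduce everything to the normal forms for pairs of reflections in Theorem~\ref{prop: normal-form-pair-reflection}, and then convert each normal pair into a diagram of folds as in the proof of Proposition~\ref{prop:equivcomreflexoes}.

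\emph{Reduction to the pre-normal form.} First, by Proposition~\ref{prop:equivalenciadereflexões}, $(\alpha_1,\alpha_2)$ is equivalent to a pair $(\beta_{1_a},\beta_{2_a})$ as in (\ref{eq:finalpsi1}), with the same nontrivial eigenvalues $\lambda_1,\lambda_2$. The following quantities are all invariant under simultaneous conjugacy (Lemma~\ref{prop:transversal} and Remark~\ref{Rem-Eing-Fix}):
\begin{itemize}
\item being Abelian;
\item the trace ${\rm tr}(\alpha_1\circ\alpha_2)$;
\item the conditions $E_{\lambda_2}^{\alpha_2}\subseteq {\rm Fix}(\alpha_1)$ and $E_{\lambda_1}^{\alpha_1}=E_{\lambda_2}^{\alpha_2}$.
\end{itemize}
Hence each hypothesis in $(i)$--$(iii)$ may be checked on $(\beta_{1_a},\beta_{2_a})$. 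Using (\ref{propaitrace}), ${\rm tr}(\alpha_1\circ\alpha_2)\neq\lambda_1\lambda_2+n-1$ translates to $a_{12}a_{21}\neq(\lambda_1-1)(\lambda_2-1)$. Remark~\ref{Rem-Eing-Fix} translates $E_{\lambda_2}^{\alpha_2}\subseteq{\rm Fix}(\alpha_1)$ to $a_{21}=0$, and $E_{\lambda_1}^{\alpha_1}=E_{\lambda_2}^{\alpha_2}$ to condition (\ref{eq:twotrianglesfinal}).

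\emph{Case-by-case matching.} The Abelian case $(i)$ is Theorem~\ref{teo:class_Abelian}. In the remaining cases, Theorem~\ref{prop: normal-form-pair-reflection} yields a pair $(\gamma_1,\gamma_2)$ from (\ref{eqsnm1}), (\ref{eqsnm3}), (\ref{eqsnm5}) or (\ref{eqsnm7}), equivalent to $(\alpha_1,\alpha_2)$. In (\ref{eqsnm3}) we substitute $a_{12}a_{21}={\rm tr}(\alpha_1\circ\alpha_2)-\lambda_1-\lambda_2-(n-2)$, again using (\ref{propaitrace}). Each $\gamma_i$ is already of the form (\ref{eqpsi}), with coefficients $a_{ij}$ read off directly:
\begin{itemize}
\item in (\ref{eqsnm1}): $a_{12}=1$, all others zero;
\item in (\ref{eqsnm3}): $a_{12}={\rm tr}(\alpha_1\circ\alpha_2)-\lambda_1-\lambda_2+2-n$ and $a_{21}=1$;
\item in (\ref{eqsnm5}): $a_{12}=(\lambda_1-1)(\lambda_2-1)$ and $a_{21}=1$;
\item in (\ref{eqsnm7}): as in (\ref{eqsnm5}), plus $a_{23}=1$.
\end{itemize}

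\emph{Passing to diagrams.} As in the proof of Proposition~\ref{prop:equivcomreflexoes}, set $g_i=f_i\circ h_i^{-1}$. This $g_i$ is a $k_i$-fold associated with $\gamma_i$ and has the explicit form (\ref{eq: normalform_g}), with coefficients $a_{ij}/(1-\lambda_i)$. Since $\gamma_i$ has order $k_i$, Theorem~\ref{teo: equivdiagrams1} gives that $(p_1,p_2)$ is equivalent to $(g_1,g_2)$. It then remains to check that dividing the listed coefficients by $1-\lambda_i$ gives the stated formulas. In particular,
$$\frac{(\lambda_1-1)(\lambda_2-1)}{1-\lambda_1}=1-\lambda_2,$$
which produces the coefficient $(1-\lambda_2)x_1$ in $(iii)$. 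The other coefficients $\frac{1}{1-\lambda_2}$ and $\frac{{\rm tr}(\alpha_1\circ\alpha_2)-\lambda_1-\lambda_2+(2-n)}{1-\lambda_1}$ follow in the same way.

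\emph{Expected obstacle.} The main care is needed in the translation of the invariant conditions. Specifically, one must verify that $a_{21}=0\iff E_{\lambda_2}\subseteq{\rm Fix}(\beta_{1_a})$, and that (\ref{eq:twotrianglesfinal}) is equivalent to the equality of eigenspaces. For the latter, compute $E_{\lambda_i}^{\beta_{i_a}}$ as the span of the vector $\bigl(a_{ij}/(\lambda_i-1)\bigr)_{j\ne i}$ with entry $1$ in position $i$. Equality of the two eigenspaces then requires proportionality of these vectors, which, combined with $a_{12}a_{21}=(\lambda_1-1)(\lambda_2-1)$, reduces exactly to (\ref{eq:twotrianglesfinal}). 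The rest is bookkeeping with the explicit formulas.
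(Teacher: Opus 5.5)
Your proposal is correct and follows the paper's own proof. Like the paper, you reduce to the pre-normal form $(\beta_{1_a},\beta_{2_a})$, identify the normal pair $(\gamma_1,\gamma_2)$ via Theorem~\ref{prop: normal-form-pair-reflection} and Remark~\ref{Rem-Eing-Fix}, and pass to diagrams using $g_i=f_i\circ h_i^{-1}$ from Proposition~\ref{prop:equivcomreflexoes} together with Theorem~\ref{teo: equivdiagrams1}; your eigenvector check of the Remark's translations and the coefficient identity $(\lambda_1-1)(\lambda_2-1)/(1-\lambda_1)=1-\lambda_2$ simply spell out details the paper leaves implicit.
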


\begin{proof} Again by Proposition \ref{prop:equivcomreflexoes}, a divergent diagram $(p_1, p_2) $ associated with $ (\alpha_1 , \alpha_2) $ is equiva\-lent to the divergent diagram $(g _1 , g_2) $ associated with $ (\beta_{1_a}, \beta_{2_a}) $, with
\[ \begin{array}{l}        
    g_1(x) = \bigl(x_1^{k_1}, \dfrac{a_ {12}}{1-\lambda_1} x_ 1 + x_2, \ldots , \dfrac{a_ {1n}}{1-\lambda_1} x_1 + x_ n \bigr), \vspace{0.2cm}\\ 
    g_2(x) = \bigl(x_1 + \dfrac{a_{21}}{1-\lambda_2}x_2, x_2^{k_2}, \dfrac{a_ {23}}{1-\lambda_2} x_2 + x_ 3, \ldots , \dfrac{a_ {2n}}{1-\lambda_2} x_2 + x_ n\bigr), \end{array} \]   
where $\lambda_i \neq 1$ is eigenvalue of $\alpha_i$ and $a_{ij} \in \mathbb{C}$ are given according to the reflections $\beta_{1_a}$ and $\beta_{2_a}$ defined in (\ref{eq:finalpsi1}). We recall that $(\beta_{1_a}, \beta_{2_a})$ are in the same equivalence class of $(\alpha_1 , \alpha_2)$. The result now follows by Theorem \ref{prop: normal-form-pair-reflection} and Remark \ref{Rem-Eing-Fix}.\\
\end{proof} 

\begin{example}
   Consider the divergent diagram of $(4,3)$-folds  $(p_1,p_2):(\mathbb{C}^3,0)\to (\mathbb{C}^6,0)$, 
   \[ \begin{array}{l}
   p_1(x_1, x_2, x_3) = \bigl( (x_1+x_3)^2 - x_ 3^4, x_3^4 + x_1 + x_3, 2 x_2 \bigr),  \\
   p_2(x_1, x_2, x_3) = (-x_2, x_3, x_1^3).
\end{array} \]
The fold $p_1$ is given by (\ref{eq: example}) in Example~\ref{Ex1-Fold-Ref}  and $p_2 = f_3 \circ h^{-1}$, where $f_3$ is the canonical fold (\ref{eq:classicfold}) for $i =3$, $k_3 = 3$ and $h(x_1,x_2,x_3)=(x_3, -x_1,x_2)$.  We then have $(p_1, p_2)$ associated with the non-commuting pair of reflections $(\alpha_1, \alpha_2)$, 
	$$\alpha_1(x_1,x_2,x_3)=(x_1+(1-i)x_3,\ x_2, \ ix_3),  \ \ \  \alpha_2(x_1,x_2,x_3)=(e^{\frac{2\pi i}{3}}x_1,x_2,x_3),$$ 
with $\lambda_1 = i$ and $\lambda_2 = e^{\frac{2\pi i}{3}}$.  
Their fixed-point subspaces are distinct coordinate planes, and so $(\alpha_1, \alpha_2)$ is transversal. We compute ${\rm tr}(\alpha_1 \circ \alpha_2)=e^{\frac{2\pi i}{3}}+i+1$, which is different from  $\lambda_1\lambda_2 + 2 = ie^{\frac{2\pi i}{3}} +2$. Moreover, $E_{\lambda_2}^{\alpha_2}$ is the $x_1$-axis. Hence, $(p_1, p_2)$ is in the class given in Theorem~\ref{teoclassdobrassnm2}($ii$), whose normal form is $(g_1, g_2)$, 
\[ \begin{array}{l}
g_1(x_1, x_2, x_3) = \bigl(x_1^4,\ \frac{1}{1-i}x_1+x_2,\ x_3 \bigr), \vspace{0.1cm}\\
g_2(x_1,x_2,x_3)=(x_1,\ x_2^3,\ x_3).
\end{array}\]

\end{example}

\end{document}